\newtheorem{theorem}{Theorem}\numberwithin{theorem}{section}
\newtheorem{prop}[theorem]{Proposition}
\newtheorem{corollary}[theorem]{Corollary}
\newtheorem{lemma}[theorem]{Lemma}
\newtheorem{problem}[theorem]{Problem}
\newtheorem{defn}{Definition}
\theoremstyle{remark}
\newtheorem{remark}[theorem]{Remark}
\newtheorem{example}[theorem]{Example}
\numberwithin{equation}{section}
\def\bb{B}
\def\cc{C}
\def\kk{k_1,\ldots,k_r}
\def\km{-k_1,\ldots,-k_r}
\def\Li{{\rm Li}}
\def\T{T_0}
\def\TT{T}
\long\def\comment#1{}
\DeclareSymbolFont{cyss}{OT2}{wncyss}{m}{n}
\DeclareMathSymbol{\sh}{\mathbin}{cyss}{`x}
\title[Zeta functions connecting MZVs and poly-Bernoulli numbers]
{Zeta functions connecting multiple zeta values and poly-Bernoulli numbers}
\author[M. Kaneko]{Masanobu Kaneko}
\author[H. Tsumura]{Hirofumi Tsumura}
\address{M.\,Kaneko: Faculty of Mathematics, Kyushu University, Motooka 744, Nishi-ku, Fukuoka 819-0395, Japan}
\email{mkaneko@math.kyushu-u.ac.jp}
\address{{H.\,Tsumura:} Department of Mathematical Sciences, Tokyo Metropolitan University, 1-1, Minami-Ohsawa, Hachioji, Tokyo 192-0397, Japan}
\email{tsumura@tmu.ac.jp}
\subjclass[2010]{Primary 11B68, Secondary 11M32, 11M99}
\keywords{Poly-Bernoulli number, multiple zeta value, multiple zeta function, polylogarithm}
\begin{document}
\baselineskip 14pt


\begin{abstract}
We first review our previous works of Arakawa and the authors on two, closely related single-variable zeta functions. 
Their special values at positive and negative integer arguments are respectively multiple zeta values and 
poly-Bernoulli numbers. We then introduce, as a generalization of Sasaki's work, level 2 analogue
of one of the two zeta functions and prove results analogous to those by Arakawa and the first named
author.
\end{abstract}

\maketitle


\section{Introduction} \label{sec-1}

In this (half expository) paper, we discuss some properties of two single-variable functions $\xi_k(s)$ 
and $\eta_k(s)$, which are closely related with each other, and their generalizations. 
We are interested in these functions because multiple zeta values and poly-Bernoulli numbers 
appear as special values, respectively at positive and negative integer arguments.

The multiple zeta value (MZV) and its variant multiple zeta-star value (MZSV), a vast amount of researches 
on which from various points of view has been carried out in recent years, are defined by 
\begin{align*}
& \zeta(k_1,\ldots,k_r)=\sum_{1 \le m_1< \cdots< m_r}\frac{1}{m_1^{k_1}\cdots m_r^{k_r}}\\
\intertext{and}
& \zeta^\star(k_1,\ldots,k_r)=\sum_{1\leq m_1\leq \cdots\leq m_r}\frac{1}{m_1^{k_1}\cdots m_r^{k_r}}
\end{align*}
for $k_1,\ldots,k_r\in \mathbb{Z}_{\geq 1}$ with $k_r>1$ (for convergence), respectively.  MZVs appear as special values
of $\xi_k(s)$ and MZSV as those of $\eta_k(s)$  (Theorem \ref{T-3-7}).  

Poly-Bernoulli numbers, having also two versions $B_n^{(k)}$ and $C_n^{(k)}$,  were defined by 
the first named author in \cite{Kaneko1997} and 
in Arakawa-Kaneko \cite{AK1999} by using generating series:  For an integer $k\in \mathbb{Z}$, 
define sequences of rational numbers 
$\{\bb_n^{(k)}\}$ and $\{\cc_n^{(k)}\}$ by 
\begin{align}
&\frac{{\rm Li}_{k}(1-e^{-t})}{1-e^{-t}}=\sum_{n=0}^\infty \bb_n^{(k)}\frac{t^n}{n!} \label{1-1}\\
\intertext{and}
&\frac{{\rm Li}_{k}(1-e^{-t})}{e^t-1}=\sum_{n=0}^\infty \cc_n^{(k)}\frac{t^n}{n!},  \label{1-2}
\end{align}
where ${\rm Li}_{k}(z)$ is the polylogarithm function (or rational function when $k\le0$) defined by
\begin{equation}
{\rm Li}_{k}(z)=\sum_{m=1}^\infty \frac{z^m}{m^k}\quad (|z|<1). \label{1-3}
\end{equation}
Since ${\rm Li}_1(z)=-\log(1-z)$, the generating functions on the left-hand sides respectively become 
\[ \frac{te^t}{e^t-1} \quad \text{and} \quad \frac{t}{e^t-1} \] when $k=1$, and hence 
$B_n^{(1)}$ and $C_n^{(1)}$
are usual Bernoulli numbers, the only difference being $B_1^{(1)}=1/2$ and $C_1^{(1)}=-1/2$. 
When $k\ne1$, $B_n^{(k)}$'s and $C_n^{(k)}$'s are totally different numbers.
We mention in passing that $B_n^{(-k)}$ ($n,k\ge0$) coincides with the number of acyclic orientations of
the complete bipartite graph $K_{n,k}$ (see \cite{Cameron}), and is also equal to the number of 
`lonesum' matrices of size $n\times k$ (see \cite{Brew2008}). 

In \cite{KT} and \cite{AK1999}, we showed that poly-Bernoulli numbers $B_n^{(k)}$ and $C_n^{(k)}$ 
appear as special values at nonpositive integers 
of $\eta_k(s)$ and $\xi_k(s)$ respectively. Multi-indexed version of these results were established
in  \cite{KT} and will be reviewed in \S\ref{sec-2}  (\eqref{xi-value-1} and \eqref{3-2}). 

In \S\ref{sec-3}, we give formulas obtained in \cite{KT} relating $\xi$ and $\eta$ (Proposition \ref{etaxi}) and also  
an expression of $\xi$ in terms of multiple zeta functions (Theorems \ref{xibyzeta}). 

In \S\ref{sec-4}, 
we focus on the duality properties of $B_n^{(k)}$ and $C_n^{(k)}$, namely 
\begin{align}
& B_n^{(-k)}=B_{k}^{(-n)}, \label{1-4}\\
& C_n^{(-k-1)}=C_{k}^{(-n-1)} \label{1-5}
\end{align}
for $k,n\in \mathbb{Z}_{\geq 0}$ 
(see \cite[Theorems\ 1\ and\ 2]{Kaneko1997} and \cite[\S\,2]{Kaneko-Mem}).
We can interpret \eqref{1-4} and \eqref{1-5} as the identities 
\[ \eta_{-k}(-n)=\eta_{-n}(-k)\quad\text{and}\quad \widetilde{\xi}_{-k-1}(-n)=\widetilde{\xi}_{-n-1}(-k)\] 
for $k,n\in \mathbb{Z}_{\geq 0}$, respectively, where $\widetilde{\xi}_{-k}(s)$ is another type of 
function interpolating $C_n^{(k)}$ (see \eqref{4-6-2}). 
These relations even hold if we extend $k$ and $n$ to complex variables, 
as shown by Yamamoto \cite{Yamamoto} and Komori-Tsumura \cite{Ko-Tsu} (see \eqref{eta-dual-3} and 
\eqref{tildexi-dual}).

In \S\ref{sec-6}, we generalize Sasaki's zeta function (see \cite{Sasaki2012}) from the viewpoint that 
it gives a level $2$-version of 
$\xi(k_1,\ldots,k_r;s)$. Our previous methods work well in this case and we obtain several formulas 
related to multiple zeta values of level $2$. 
This section is substantially new.

\section{Multi-poly-Bernoulli numbers and related zeta functions}\label{sec-2}

Imatomi, Takeda, and the first named author \cite{IKT2014} introduced multi-index generalizations of poly-Bernoulli numbers  
(``multi-poly-Bernoulli numbers") as follows.

\begin{defn}\label{def-MPBer}
For $\kk\in \mathbb{Z}$, define two types of multiple poly-Bernoulli numbers by
\begin{align}
&\frac{{\rm Li}_{k_1,\ldots,k_r}(1-e^{-t})}{1-e^{-t}}=\sum_{n=0}^\infty \bb_n^{(\kk)}\frac{t^n}{n!}  \label{1-6}\\
\intertext{and}
&\frac{{\rm Li}_{k_1,\ldots,k_r}(1-e^{-t})}{e^{t}-1}=\sum_{n=0}^\infty \cc_n^{(\kk)}\frac{t^n}{n!},  \label{1-6-2}
\end{align}
where 
\begin{equation}
{\rm Li}_{k_1,\ldots,k_r}(z)=\sum_{1\leq m_1<\cdots<m_r}\frac{z^{m_r}}{m_1^{k_1}m_2^{k_2}\cdots m_r^{k_r}}  \label{1-7}
\end{equation}
is the multiple polylogarithm. 
\end{defn}

\begin{remark} 
In \cite{IKT2014}, 
the following relation between $\cc_{p-2}^{(\kk)}$ and the `finite multiple zeta value' was proved:
\begin{equation}
\sum_{1\leq m_1<\cdots <m_r<p}\frac{1}{m_1^{k_1}\cdots m_r^{k_r}} \equiv -C_{p-2}^{(k_1,\ldots,k_{r-1},k_r-1)} \mod p \label{1-8}
\end{equation}
for any prime number $p$.
\end{remark}

In connection with these numbers, we consider the following two types of zeta functions. The first one,
$\xi(\kk;s)$, was defined in \cite{AK1999} as follows.

\begin{defn}\label{def-xi}
For $r\in \mathbb{Z}_{\geq 1}$, $k_1,\ldots,k_r\in \mathbb{Z}_{\geq 1}$ and $\Re s>0$, 
\begin{equation}
\xi(\kk;s)=\frac{1}{\Gamma(s)}\int_0^\infty {t^{s-1}}\frac{\Li_{\kk}(1-e^{-t})}{e^t-1}\,dt, \label{xidef}
\end{equation}
where $\Gamma(s)$ is the gamma function. 
In the case $r=1$, denote $\xi(k;s)$ by $\xi_k(s)$. Note that $\xi_1(s)=s\zeta(s+1)$. 
\end{defn}

This can be analytically continued to an entire function for $s\in\mathbb{C}$, and satisfies the following (see \cite[Remark 2.4]{AK1999}):
\begin{equation}
\xi(\kk;-m)=(-1)^m\cc_{m}^{(\kk)}\quad (m\in \mathbb{Z}_{\geq 0}) \label{xi-value-1}
\end{equation}
for $(\kk)\in \mathbb{Z}_{\geq 1}^r$. This can be regarded as a poly-analogue of the classical 
evaluation \[\xi_1(-m)=(-m)\zeta(1-m)=(-1)^mC_m.\]

The second, $\eta(\kk;s)$, is defined as follows (see \cite{KT}).

\begin{defn}\label{Def-Main-1} 
For $r\in \mathbb{Z}_{\geq 1}$, $k_1,\ldots,k_r\in \mathbb{Z}_{\geq 1}$ and $\Re s>1-r$,  
\begin{equation}
\eta(\kk;s)=\frac{1}{\Gamma(s)}\int_{0}^\infty t^{s-1}\frac{\Li_{\kk}(1-e^t)}{1-e^t}dt \label{etadef}
\end{equation}
for $s\in \mathbb{C}$ with ${\rm Re}(s)>1-r$, In the case $r=1$, denote $\eta(k;s)$ by $\eta_k(s)$. Note that $\eta_1(s)=s\zeta(s+1)$. 
\end{defn}

Similar to $\xi(\kk;s)$, we see that $\eta(\kk;s)$ can be analytically continued to an entire function for 
$s\in\mathbb{C}$, and satisfies the following (see \cite[Theorem 2.3]{KT}):
\begin{equation}
\eta(\kk;-m)=\bb_{m}^{(\kk)}\quad (m\in \mathbb{Z}_{\geq 0}) \label{3-2}
\end{equation}
for positive integers $k_1,\ldots,k_r\in \mathbb{Z}_{\geq 1}$. 
This can be regarded as a poly-analogue of \[\eta_1(-m)=(-m)\zeta(1-m)=B_m.\]

As for their values at positive integers, we can obtain explicit expressions in terms of 
multiple zeta/zeta-star values as follows. We prepare several notations. 
For an index set ${\bf k}=(\kk)\in\mathbb{Z}_{\ge1}^r$, 
put ${\bf k}_+=(k_1,\ldots,k_{r-1},k_r+1)$. 
The usual dual index of an admissible index ${\bf k}$ is denoted by ${\bf k}^*$. For ${\bf j}=(j_1,\ldots,j_r)\in\mathbb{Z}_{\ge0}^r$, 
we set $\vert{\bf j}\vert=j_1+\cdots+j_r$ and call it the weight of ${\bf j}$, 
and $d({\bf j})=r$, the depth of ${\bf j}$. For two such indices ${\bf k}$ and ${\bf j}$ of the same depth, 
we denote by ${\bf k}+{\bf j}$ the index obtained by the component-wise addition,  
${\bf k}+{\bf j}=(k_1+j_1, \ldots, k_r+j_r)$, and by $b({\bf k};{\bf j})$ the quantity given by
\[   b({\bf k};{\bf j}):=\prod_{i=1}^r \binom{k_i+j_i-1}{j_i}. \]

\begin{theorem}[\cite{KT}\ Theorem 2.5]\label{T-3-7}\ For any index set ${\bf k}=(\kk)\in\mathbb{Z}_{\ge1}^r$
and any $m\in \mathbb{Z}_{\geq 1}$, we have 
\begin{equation}\label{xivalue} 
\xi(\kk;m)=\sum_{\vert{\bf j}\vert=m-1,\,d({\bf j})=n} b(({\bf k}_+)^*;{\bf j})\,
\zeta(({\bf k}_+)^*+{\bf j})
\end{equation}
and
\begin{equation}\label{etavalue}
\begin{split}
\eta(\kk;m)=(-1)^{r-1} & \sum_{\vert{\bf j}\vert=m-1,\, d({\bf j})=n} b(({\bf k}_+)^*;{\bf j})\\
 & \qquad \times \zeta^\star(({\bf k}_+)^*+{\bf j}),
\end{split}
\end{equation}
where both sums run over all ${\bf j}\in\mathbb{Z}_{\ge0}^r$ of weight $m-1$ and depth 
$n:=d({\bf k}_+^*) \ (=\vert{\bf k}\vert+1-d({\bf k}))$.

In particular, we have
\[ \qquad \xi(\kk;1)=\zeta({\bf k}_+) \]
and
\[ \eta(\kk;1)=(-1)^{r-1}\zeta^\star(({\bf k}_+)^*).\]
\end{theorem}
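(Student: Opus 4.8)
The plan is to evaluate the Mellin integral \eqref{xidef} at $s=m$ by writing its integrand as a product of iterated integrals and then reading off multiple zeta values after the substitution $u=1-e^{-t}$. For an index ${\bf a}=(a_1,\ldots,a_l)$ let $W({\bf a})$ denote the word $\omega_1\omega_0^{a_1-1}\cdots\omega_1\omega_0^{a_l-1}$ in two $1$-forms $\omega_0,\omega_1$, read from the inner endpoint, so that $\Li_{\bf a}(z)=\int_0^z W({\bf a})$ and $\int_0^1 W({\bf a})=\zeta({\bf a})$ for admissible ${\bf a}$. First I would record, via $u=1-e^{-t}$ (so $du=(1-u)\,dt$), the differential relation
\begin{equation*}
\frac{d}{dt}\Li_{w_1,\ldots,w_l}(1-e^{-t})=
\begin{cases}
\dfrac{1}{e^{t}-1}\,\Li_{w_1,\ldots,w_l-1}(1-e^{-t}) & (w_l\ge 2),\\[2mm]
\Li_{w_1,\ldots,w_{l-1}}(1-e^{-t}) & (w_l=1).
\end{cases}
\end{equation*}
In the variable $t$ this says precisely that $\Li_{\bf k}(1-e^{-t})=\int_0^t W({\bf k})$, where now $\omega_0=\frac{dt}{e^{t}-1}$ and $\omega_1=dt$.

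In these same coordinates $\frac{t^{m-1}}{(m-1)!}=\int_0^t\omega_1^{m-1}$ and $\frac{dt}{e^{t}-1}=\omega_0$, so the integrand of \eqref{xidef} at $s=m$ is a product of two iterated integrals with common endpoints times one more form $\omega_0$. Using that such a product is the iterated integral of the shuffle, I obtain
\begin{equation*}
\xi(\kk;m)=\int_0^{\infty}\bigl(\omega_1^{m-1}\sh W({\bf k})\bigr)\,\omega_0 ,
\end{equation*}
and the substitution $u=1-e^{-t}$ sends $\omega_0\mapsto\frac{du}{u}$ and $\omega_1\mapsto\frac{du}{1-u}$, turning the right-hand side into a $\mathbb{Z}$-linear combination of iterated integrals over $(0,1)$, i.e.\ of multiple zeta values of weight $\vert{\bf k}\vert+m$. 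The case $m=1$ already gives $\xi(\kk;1)=\int_0^1 W({\bf k})\omega_0=\int_0^1 W({\bf k}_+)=\zeta({\bf k}_+)$, since appending $\omega_0$ increments the last index.

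The heart of the argument is to identify this combination, and here the duality involution is essential. Reversing the word and exchanging $\omega_0\leftrightarrow\omega_1$ is a shuffle homomorphism that preserves $\int_0^1$, and since $W({\bf k})\omega_0=W({\bf k}_+)$ it carries the above integral to $\int_0^1\omega_1\bigl(\omega_0^{m-1}\sh V\bigr)$, where $\omega_1 V=W(({\bf k}_+)^{*})$. This is the step responsible for the appearance of the dual index: without it the shuffled $\omega_1$'s would split indices, whereas afterwards only $\omega_0$'s are shuffled in and they merely enlarge existing $\omega_0$-runs. Writing $({\bf k}_+)^{*}=(c_1,\ldots,c_n)$, the word $V$ consists of $n$ runs of $\omega_0$'s of lengths $c_i-1$ separated by $\omega_1$'s; shuffling the $m-1$ extra $\omega_0$'s distributes $j_i\ge 0$ of them into the $i$-th run, with $\sum_i j_i=m-1$, producing $W(({\bf k}_+)^{*}+{\bf j})$. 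The main obstacle is the combinatorial bookkeeping of the multiplicities: applying $\omega_0^{p}\sh\omega_0^{q}=\binom{p+q}{p}\omega_0^{p+q}$ run by run shows that the coefficient of $W(({\bf k}_+)^{*}+{\bf j})$ is $\prod_i\binom{(c_i-1)+j_i}{j_i}=b(({\bf k}_+)^{*};{\bf j})$. As $({\bf k}_+)^{*}$ is admissible, so is every $({\bf k}_+)^{*}+{\bf j}$, and integration yields $\zeta(({\bf k}_+)^{*}+{\bf j})$; summing over ${\bf j}$ gives \eqref{xivalue}.

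For \eqref{etavalue} I would run the same scheme with $1-e^{t}$ in place of $1-e^{-t}$ in \eqref{etadef}. The only differences are two features that must be tracked carefully. First, the differential relation for $\Li_{w_1,\ldots,w_l}(1-e^{t})$ acquires, in the case $w_l\ge 2$, an additional term besides the index-decrementing one; this extra term is exactly what replaces the strict inequalities of $\zeta$ by the non-strict ones of $\zeta^{\star}$, so that the star-version of $W$ appears and the multiple zeta-star values of \eqref{etavalue} are produced. Second, a careful count of the sign contributions of the negative argument $1-e^{t}<0$ yields the overall factor $(-1)^{r-1}$. Once these are in place, the shuffle assembly, the duality step, and the binomial count of the coefficients $b(({\bf k}_+)^{*};{\bf j})$ are formally identical to the $\xi$ case, and the specialization $m=1$ gives $\eta(\kk;1)=(-1)^{r-1}\zeta^{\star}(({\bf k}_+)^{*})$.
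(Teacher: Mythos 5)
Your argument is correct, but it takes a genuinely different route from the one the paper indicates. The paper states Theorem \ref{T-3-7} as \cite{KT} Theorem 2.5 and, in the Remark that follows, explains that the proof runs by the method of \cite{AK1999}: unfold $\Li_{\kk}(1-e^{-t})$ by repeated one-variable integrations, change variables $t_i=x_i+\cdots+x_k$ on a simplex, expand $(x_1+\cdots+x_k)^{m-1}$ multinomially, and recognize the resulting multi-dimensional integrals as the displayed integral representations of $\zeta$ and $\zeta^\star$; this is exactly the computation the paper carries out for the level-$2$ analogues in Theorems \ref{Th-6-4} and \ref{T-5-1}. You instead work in the shuffle algebra of Chen iterated integrals, writing $\xi(\kk;m)=\int_0^1(\omega_1^{m-1}\sh W({\bf k}))\,\omega_0$, applying the word-reversal-plus-swap involution, and counting with $\omega_0^{p}\sh\omega_0^{q}=\binom{p+q}{p}\omega_0^{p+q}$. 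What your approach buys is transparency: the dual index $({\bf k}_+)^*$ appears literally as the image of a word under the involution, and the coefficients $b(({\bf k}_+)^*;{\bf j})$ drop out of a single binomial identity, whereas in the simplex computation the dual index is hidden in the change of variables. The price is that the $\zeta^\star$ case requires a three-form calculus, while the paper's method treats $\zeta$ and $\zeta^\star$ in strict parallel via the two integral representations.

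The one place where you assert rather than prove is the claim that the duality step for $\eta$ is ``formally identical.'' It is true, but for a reason that should be made explicit: the extra term in the differential relation for $\Li(1-e^{t})$ introduces, after the substitution $u=1-e^{-t}$, the third $1$-form $c=\frac{du}{u(1-u)}=\omega_0+\omega_1$, and the involution (reversal together with $\omega_0\leftrightarrow\omega_1$) fixes $c$; this is precisely what lets the dual word be read off as in the $\xi$ case. You then still need to check that the resulting words $\omega_1\,c^{\,k_r-1}\omega_0\cdots c^{\,k_1-1}\omega_0$, with the $m-1$ shuffled $\omega_0$'s distributed into the gaps between the $c$'s, constitute the iterated-integral representation of $\zeta^\star(({\bf k}_+)^*+{\bf j})$ --- equivalently, that expanding each $c$ into $\omega_0+\omega_1$ reproduces $\zeta^\star({\bf l})=\sum_{{\bf l}'\preceq{\bf l}}\zeta({\bf l}')$ --- and that every pure word in that expansion is admissible (it is, since the innermost letter remains $\omega_1$ and the outermost remains $\omega_0$). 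With those two verifications supplied, together with the sign count $(-1)^{r}$ from the $r$ pulled-back forms $\frac{dz}{1-z}\mapsto -dt$ times the $-1$ from $\frac{dt}{1-e^{t}}$, your proof of \eqref{etavalue} is complete.
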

Here we have used the duality $\zeta(({\bf k}_+)^*)=\zeta({\bf k}_+)$.

\begin{remark}
In \cite[Theorem 9 (i)]{AK1999}, we proved \eqref{xivalue} in the case when $(\kk)=(1,\ldots,1,k)$. 
The above formulas generalize this and give its $\eta$-version. In fact, these can be proved by the same method as in \cite{AK1999}, i.e., 
by considering the integral expressions 
\begin{align*}
\zeta(k_1,\ldots,k_r) \ &= \frac{1}{\prod_{j=1}^{r}\Gamma(k_j)}\int_0^\infty\!\!\cdots\int_0^\infty 
\frac{x_1^{k_1-1}\cdots x_r^{k_r-1}}{e^{x_1+\cdots+x_r}-1}\\
& \qquad \times  \frac1{e^{x_{2}+\cdots+x_{r}}-1}\cdots \frac1{e^{x_r}-1}dx_1\cdots dx_r, \\
\zeta^\star(k_1,\ldots,k_r) 
 &= \frac{1}{\prod_{j=1}^{r}\Gamma(k_j)}\int_0^\infty\!\!\cdots\int_0^\infty 
\frac{x_1^{k_1-1}\cdots x_r^{k_r-1}}{e^{x_1+\cdots+x_r}-1}\\
& \qquad\times \frac{e^{x_{2}+\cdots+x_r}}{e^{x_{2}+\cdots+x_{r}}-1} \cdots \frac{e^{x_r}}{e^{x_r}-1}dx_1\cdots dx_r
\end{align*}
for $k_1,\ldots,k_r\in \mathbb{Z}_{\geq 1}$ with $k_r\geq 2$.

We emphasize that the formulas \eqref{xivalue} and \eqref{etavalue} have remarkable similarity in that one obtains \eqref{etavalue} 
just by replacing multiple zeta values in \eqref{xivalue} with multiple zeta-star values. 
\end{remark}

Noting the duality $(k+1)^*=(\underbrace{1,\ldots,1}_{k-1},2)$, we can obtain the following two identities.
The former is a special case of \cite[Theorem 9 (i)]{AK1999} and the latter is \cite[Corollary 2.8]{KT}. 

\begin{corollary}  For $k,m\ge1$, we have
\begin{align}
&\xi_k(m)=\sum_{j_1,\ldots,j_{k-1}\ge1, j_k\ge2\atop j_1+\cdots+j_k=k+m}
(j_k-1)\zeta(j_1,\ldots,j_{k-1},j_k), \label{3-8}\\
& \eta_k(m)=\sum_{j_1,\ldots,j_{k-1}\ge1, j_k\ge2\atop j_1+\cdots+j_k=k+m}
(j_k-1)\zeta^\star(j_1,\ldots,j_{k-1},j_k). \label{3-8-2}
\end{align}
\end{corollary}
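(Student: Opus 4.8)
The plan is to obtain both identities simply as the depth-one specialization ($r=1$) of Theorem~\ref{T-3-7}, so that no new analytic input is needed and the entire argument reduces to combinatorial bookkeeping. Taking ${\bf k}=(k)$ gives ${\bf k}_+=(k+1)$, and the duality quoted just above the statement yields $({\bf k}_+)^*=(k+1)^*=(\underbrace{1,\ldots,1}_{k-1},2)$, an index of depth $n=d({\bf k}_+^*)=k$ (in agreement with $n=\vert{\bf k}\vert+1-d({\bf k})=k+1-1$). Hence in \eqref{xivalue} the summation index ${\bf j}=(j_1,\ldots,j_k)\in\mathbb{Z}_{\ge0}^k$ ranges over weight $m-1$ and depth $k$.

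First I would compute the combinatorial weight $b(({\bf k}_+)^*;{\bf j})=\prod_{i=1}^k\binom{(({\bf k}_+)^*)_i+j_i-1}{j_i}$. The first $k-1$ components of $({\bf k}_+)^*$ equal $1$, so each factor is $\binom{j_i}{j_i}=1$, while the final component equals $2$, contributing $\binom{j_k+1}{j_k}=j_k+1$. Therefore $b(({\bf k}_+)^*;{\bf j})=j_k+1$, and the product degenerates to a single linear factor.

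Next I would change the summation variable from ${\bf j}$ to $\boldsymbol{\ell}:=({\bf k}_+)^*+{\bf j}$, that is $\ell_i=1+j_i$ for $1\le i\le k-1$ and $\ell_k=2+j_k$. Under this shift the conditions $j_i\ge0$ become $\ell_i\ge1$ for $i<k$ and $\ell_k\ge2$; the weight constraint $\vert{\bf j}\vert=m-1$ becomes $\ell_1+\cdots+\ell_k=(k+1)+(m-1)=k+m$; and the weight transforms as $j_k+1=\ell_k-1$. Substituting into \eqref{xivalue} and relabelling $\boldsymbol{\ell}$ as $(j_1,\ldots,j_k)$ reproduces \eqref{3-8} verbatim. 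For \eqref{3-8-2} the same substitution in \eqref{etavalue} applies, the overall sign being $(-1)^{r-1}=(-1)^0=1$, so one need only replace $\zeta$ by $\zeta^\star$.

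There is no genuine obstacle here; the one place demanding care is the index shift, where one must check simultaneously that the weight $m-1$ is carried to $k+m$, that the uniform lower bounds $j_i\ge0$ translate into the asymmetric bounds ($\ell_i\ge1$ for $i<k$ versus $\ell_k\ge2$) delimiting the sum in the statement, and that the binomial product indeed collapses to the single factor $\ell_k-1$.
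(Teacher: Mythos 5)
Your proposal is correct and is exactly the argument the paper intends: the corollary is obtained by specializing Theorem~\ref{T-3-7} to ${\bf k}=(k)$, using the duality $(k+1)^*=(\underbrace{1,\ldots,1}_{k-1},2)$, computing $b(({\bf k}_+)^*;{\bf j})=j_k+1$, and shifting the summation index. The paper states this only as a one-line remark before the corollary, so your write-up simply supplies the bookkeeping it leaves implicit.
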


\section{Relations among $\xi,\,\eta$ and multiple zeta functions}\label{sec-3}

In this section, we give formulas describing relations among $\xi,\,\eta$ and multiple zeta functions by 
employing two types of connection formulas for the multiple polylogarithm. 

First we show that each of the functions $\eta$ and $\xi$ can be written 
as a linear combination of the other in exactly the same way, using the 
so-called Landen-type connection formula for the multiple polylogarithm $\Li_{\kk}(z)$. 

For two indices ${\bf k}$ and ${\bf k'}$ of 
the same weight, we say ${\bf k'}$ refines ${\bf k}$, denoted ${\bf k}\preceq {\bf k'}$,
if ${\bf k}$ is obtained from ${\bf k'}$ by replacing some commas by $+$'s. 
For example, 
\[ (3)=(1+1+1)\preceq(1,1,1),\  (2,3)=(2,2+1)\preceq(2,2,1)\]
etc. 
Using this notation, the Landen connection formula for the multiple polylogarithm is as follows.

\begin{lemma}[Okuda-Ueno\ \cite{OU}\ Proposition 9]\label{landen} For any index ${\bf k}$ of depth $r$, we have
\begin{equation}  
\Li_{\bf k}\biggl(\frac{z}{z-1}\biggr)=(-1)^r\sum_{{\bf k}\preceq{\bf k'}}\Li_{\bf k'}(z). \label{Landen-F}
\end{equation}
\end{lemma}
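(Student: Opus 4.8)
The plan is to prove the identity from the iterated-integral representation of the multiple polylogarithm, combined with the change of variables coming from the involution $\phi(t)=t/(t-1)$. Writing $\omega_0=dt/t$ and $\omega_1=dt/(1-t)$, I will use the standard fact that, for $|z|<1$,
\[ \Li_{\bf k}(z)=\int_0^z \omega_0^{k_r-1}\omega_1\,\omega_0^{k_{r-1}-1}\omega_1\cdots\omega_0^{k_1-1}\omega_1, \]
the integral being taken along a path from $0$ to $z$, with the rightmost (innermost) form always $\omega_1$, which guarantees convergence at $0$. Note that the number of separating forms $\omega_1$ equals the depth $r$. The first observation is that $\phi$ is an involution fixing $0$, so the substitution $t=\phi(u)$ converts an integral from $0$ to $\phi(z)=z/(z-1)$ into one from $0$ to $z$.

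The heart of the argument is the pullback computation. Setting $t=u/(u-1)$ one finds
\[ \phi^{\ast}\omega_0=\frac{du}{u}+\frac{du}{1-u}=\omega_0+\omega_1,\qquad \phi^{\ast}\omega_1=-\,\frac{du}{1-u}=-\omega_1. \]
Applying this to the representation of $\Li_{\bf k}(z/(z-1))$, each of the $r$ original separating forms $\omega_1$ produces a factor $-1$ (the $\omega_1$'s created inside $\omega_0+\omega_1$ carry no sign), so that
\[ \Li_{\bf k}\!\left(\frac{z}{z-1}\right)=(-1)^r\int_0^z (\omega_0+\omega_1)^{k_r-1}\omega_1\,(\omega_0+\omega_1)^{k_{r-1}-1}\omega_1\cdots(\omega_0+\omega_1)^{k_1-1}\omega_1. \]

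It then remains to expand the product and read off the terms. Expanding $(\omega_0+\omega_1)^{k_i-1}$ and keeping the trailing $\omega_1$, the $i$-th block ranges over all words of length $k_i$ ending in $\omega_1$, i.e.\ over all compositions of $k_i$, each with coefficient $+1$; decoding the multiple polylogarithm attached to each resulting word then shows that the whole expansion equals $\sum_{{\bf k}\preceq{\bf k}'}\Li_{{\bf k}'}(z)$, since a refinement ${\bf k}'$ of ${\bf k}$ is the same datum as a choice of composition of each $k_i$, and this correspondence is a bijection. (I would sanity-check the signs and the decoding on the cases ${\bf k}=(k)$ and ${\bf k}=(1,2)$.) This establishes the identity on the region where both $|z|<1$ and $|z/(z-1)|<1$, i.e.\ $|z|<1$ and $\Re z<1/2$, after which I extend it to all admissible $z$ by analytic continuation.

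I expect the combinatorial bookkeeping to be routine once the form transformation is in place; the points demanding genuine care are (i) pinning down the pullback of $\omega_0,\omega_1$ and the resulting global sign $(-1)^r$ exactly, and (ii) justifying the change of variables at the level of iterated integrals — choosing paths that avoid the singularity $t=1$, checking convergence of the possibly $\omega_1$-led (non-admissible) integrals, which is harmless for $|z|<1$, and invoking analytic continuation at the end to remove the restriction on $z$.
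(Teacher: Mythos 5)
Your argument is correct. Note, however, that the paper itself does not prove this lemma at all: it is quoted verbatim from Okuda--Ueno \cite{OU}, so there is no internal proof to compare against. Your iterated-integral proof is the standard one (and is essentially the argument in \cite{OU}): the representation $\Li_{\bf k}(z)=\int_0^z \omega_0^{k_r-1}\omega_1\cdots\omega_0^{k_1-1}\omega_1$ is consistent with the paper's convention \eqref{1-7} (check: $\tfrac{d}{dz}\Li_{\bf k}(z)=\tfrac1z\Li_{k_1,\ldots,k_r-1}(z)$ for $k_r\ge2$ and $=\tfrac1{1-z}\Li_{k_1,\ldots,k_{r-1}}(z)$ for $k_r=1$), the pullbacks $\phi^{*}\omega_0=\omega_0+\omega_1$, $\phi^{*}\omega_1=-\omega_1$ are right, so exactly the $r$ separating $\omega_1$'s contribute the sign $(-1)^r$, and the expansion of each block $(\omega_0+\omega_1)^{k_i-1}\omega_1$ over words of length $k_i$ ending in $\omega_1$ is precisely the bijection with compositions of $k_i$, hence with refinements ${\bf k}\preceq{\bf k'}$ (each appearing with coefficient $1$; e.g.\ for ${\bf k}=(2)$ one recovers the classical Landen formula $\Li_2\bigl(\tfrac{z}{z-1}\bigr)=-\Li_2(z)-\tfrac12\log^2(1-z)$ via $\Li_{1,1}(z)=\tfrac12\log^2(1-z)$). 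Your attention to the two delicate points is well placed: all expanded words still end in $\omega_1$, so convergence at $0$ is never an issue, and the possibly non-admissible (i.e.\ $\omega_1$-led at the $z$-end) integrals are finite for $|z|<1$, so no shuffle regularization is needed; the initial restriction to $|z|<1$, $\Re z<1/2$ is then removed by analytic continuation. What your approach buys over the paper's bare citation is a self-contained derivation that also makes transparent \emph{why} the involutivity of $z\mapsto z/(z-1)$ forces the symmetry of the pair \eqref{etabyxi}--\eqref{xibyeta} in Proposition \ref{etaxi}.
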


Using \eqref{Landen-F} for the case $z=1-e^{-t}$ (resp. $1-e^t$), namely $z/(z-1)=1-e^t$ (resp. $1-e^{-t}$), we can prove the following.

\begin{prop}[\cite{KT}\ Proposition 3.2]\label{etaxi} Let ${\bf k}$ be any index set and $r$ its depth.  We have the relations
\begin{equation} \eta({\bf k};s)= (-1)^{r-1}\sum_{{\bf k}\preceq{\bf k'}}\xi({\bf k'};s)\label{etabyxi}
\end{equation}
and
\begin{equation}\xi({\bf k};s)= (-1)^{r-1}\sum_{{\bf k}\preceq{\bf k'}}\eta({\bf k'};s).\label{xibyeta}
\end{equation}
\end{prop}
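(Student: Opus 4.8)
The plan is to exploit the Landen connection formula of Lemma~\ref{landen} together with the observation that the M\"obius involution $z\mapsto z/(z-1)$ interchanges precisely the two arguments occurring in the integrands of $\xi$ and $\eta$. The first thing I would record is the elementary identity: setting $z=1-e^{-t}$ gives $z/(z-1)=1-e^{t}$, while setting $z=1-e^{t}$ gives $z/(z-1)=1-e^{-t}$. Thus each of these two substitutions turns \eqref{Landen-F} into an identity expressing one of $\Li_{\bf k}(1-e^{t})$, $\Li_{\bf k}(1-e^{-t})$ as $(-1)^{r}$ times a finite sum of $\Li_{\bf k'}$ evaluated at the other argument, the sum running over the refinements ${\bf k}\preceq{\bf k'}$.

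To prove \eqref{etabyxi} I would start from the integral representation \eqref{etadef}, insert the Landen formula with $z=1-e^{-t}$ to replace $\Li_{\bf k}(1-e^{t})$ by $(-1)^{r}\sum_{{\bf k}\preceq{\bf k'}}\Li_{\bf k'}(1-e^{-t})$, and then rewrite the denominator via $1-e^{t}=-(e^{t}-1)$. Since the refinement sum is finite it commutes with the integral, and collecting the two sign contributions, $(-1)^{r}$ from Landen and $-1$ from the denominator flip, produces the overall factor $(-1)^{r+1}=(-1)^{r-1}$. Each remaining integral $\frac{1}{\Gamma(s)}\int_{0}^{\infty}t^{s-1}\Li_{\bf k'}(1-e^{-t})/(e^{t}-1)\,dt$ is by definition $\xi({\bf k'};s)$, yielding \eqref{etabyxi}. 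The companion relation \eqref{xibyeta} follows from the mirror-image computation: start from \eqref{xidef}, apply Landen with $z=1-e^{t}$ to express $\Li_{\bf k}(1-e^{-t})$ through $\Li_{\bf k'}(1-e^{t})$, use $e^{t}-1=-(1-e^{t})$ to flip the denominator, and recognize the resulting integrals as $\eta({\bf k'};s)$.

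The one genuinely delicate point, and the step I would treat most carefully, is the analytic bookkeeping. The defining series for $\Li_{\bf k}$ converges only on the unit disk, whereas the argument $1-e^{t}$ leaves it; so \eqref{Landen-F} must be read as an identity of the analytically continued multiple polylogarithm, which is exactly what Lemma~\ref{landen} provides. I would also fix a half-plane of $s$ in which every integral converges absolutely before manipulating it. A local analysis at $t=0$, where $\Li_{\bf k}(1-e^{\pm t})\sim c\,t^{r}$ and the denominators vanish to first order, shows the integrands behave like $t^{s+r-2}$, so absolute convergence holds for $\Re s>1-r$, while the exponential decay of $1/(e^{t}-1)$ (and of $1/(1-e^{t})$) controls $t\to\infty$; this common half-plane is where I would first establish the two identities. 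Because $\xi({\bf k};s)$ and $\eta({\bf k};s)$ both continue to entire functions of $s$, as recalled after \eqref{xidef} and \eqref{etadef}, and the right-hand sides are finite linear combinations of such continuations, the identity theorem then propagates both relations to all $s\in\mathbb{C}$. I expect no obstacle beyond this continuation argument, since all the refinement combinatorics is already packaged inside the Landen formula.
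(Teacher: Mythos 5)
Your proposal is correct and follows exactly the route the paper indicates: substitute $z=1-e^{-t}$ (resp.\ $z=1-e^{t}$) into the Landen formula \eqref{Landen-F}, insert the result into the integral representations \eqref{etadef} and \eqref{xidef}, track the sign $(-1)^{r}\cdot(-1)=(-1)^{r-1}$ from the denominator flip, and extend by analytic continuation. The sign bookkeeping and the convergence discussion (integrand $\sim t^{s+r-2}$ at $t=0$, identity theorem to pass to all $s\in\mathbb{C}$) are both accurate, so nothing needs to be added.
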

The reason of the symmetry is that the transformation $z\to z/(z-1)$ is involutive. 

Here we recall a certain formula between $\xi$ and the single-variable multiple zeta function
\begin{equation}
\zeta(k_1,\ldots,k_r;s)=\sum_{1\leq m_1<\cdots<m_r<m}\frac{1}{m_1^{k_1}\cdots m_r^{k_r}m^s}  \label{mzfct}
\end{equation}
defined for integers $k_1,\ldots,k_r$ as follows.

\begin{theorem}[\cite{AK1999}\ Theorem 8] \label{AK-Th8} 
For $r,k\in \mathbb{Z}_{\geq 1}$, 
\begin{align}
& \xi(\underbrace{1,\ldots,1}_{r-1},k;s)\label{AK-thm-8}\\
& =(-1)^{k-1}\sum_{a_1+\cdots+a_k=r \atop \forall a_j \geq 0}\binom{s+a_k-1}{a_k}\zeta(a_1+1,\ldots,a_{k-1}+1;a_k+s)\notag\\
& \qquad +\sum_{j=0}^{k-2}(-1)^j \zeta(\underbrace{1,\ldots,1}_{r-1},k-j)\zeta(\underbrace{1,\ldots,1}_{j};s).\notag
\end{align}
\end{theorem}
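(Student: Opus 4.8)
The plan is to pass from the polylogarithm at argument $1-e^{-t}$ to polylogarithms at $e^{-t}$, because the latter transform cleanly into the single-variable multiple zeta functions \eqref{mzfct}.

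First I would record the basic evaluation
$$\frac{1}{\Gamma(s)}\int_0^\infty t^{s+a-1}\,\frac{\Li_{l_1,\ldots,l_p}(e^{-t})}{e^t-1}\,dt=\frac{\Gamma(s+a)}{\Gamma(s)}\,\zeta(l_1,\ldots,l_p;s+a)=a!\binom{s+a-1}{a}\zeta(l_1,\ldots,l_p;s+a),$$
valid for every index $(l_1,\ldots,l_p)$ and every integer $a\ge0$. This is proved by expanding $\Li_{l_1,\ldots,l_p}(e^{-t})=\sum_{m_1<\cdots<m_p}e^{-m_pt}/(m_1^{l_1}\cdots m_p^{l_p})$ and $1/(e^t-1)=\sum_{n\ge1}e^{-nt}$, integrating term by term via $\frac{1}{\Gamma(s)}\int_0^\infty t^{s+a-1}e^{-Nt}\,dt=\Gamma(s+a)N^{-(s+a)}/\Gamma(s)$, and recognising the top summation variable $m=m_p+n>m_p$; the only analytic point is to justify the interchange, which holds for $\Re s$ large and extends by analytic continuation. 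For $a=0$ this is $\zeta(l_1,\ldots,l_p;s)$, while the general $a$ supplies exactly the binomial factor $\binom{s+a-1}{a}$ and the shift of the last argument from $s$ to $s+a$ occurring on the right-hand side of \eqref{AK-thm-8}.

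The second and decisive ingredient is a reflection formula for the multiple polylogarithm relating the argument $1-x$ to $x$. Writing $x=e^{-t}$, so that $1-x=1-e^{-t}$ and $\log x=-t$, I would establish an expansion of the shape
$$\Li_{\underbrace{1,\ldots,1}_{r-1},k}(1-x)=(-1)^{k-1}\!\!\sum_{a_1+\cdots+a_k=r\atop \forall a_j\ge0}\frac{(\log x)^{a_k}}{a_k!}\,\Li_{a_1+1,\ldots,a_{k-1}+1}(x)+\sum_{j=0}^{k-2}(-1)^j\,\zeta(\underbrace{1,\ldots,1}_{r-1},k-j)\,\Li_{\underbrace{1,\ldots,1}_{j}}(x),$$
a regularised instance of the $x\mapsto1-x$ symmetry of iterated integrals. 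Weight is manifestly preserved (both sides have weight $r+k-1$), the fully reflected term carries the sign $(-1)^{\mathrm{wt}-\mathrm{dep}}=(-1)^{k-1}$, and the depth-reducing (constant $\times$ lower polylogarithm) pieces produce the products $\zeta(\underbrace{1,\ldots,1}_{r-1},k-j)\,\Li_{\underbrace{1,\ldots,1}_{j}}(x)$, the range $0\le j\le k-2$ being dictated by convergence of $\Li_{\underbrace{1,\ldots,1}_{r-1},k-j}(1)=\zeta(\underbrace{1,\ldots,1}_{r-1},k-j)$. Substituting this expansion into \eqref{xidef}, dividing by $e^t-1$, and applying the evaluation of the first paragraph term by term then yields \eqref{AK-thm-8}: the main sum becomes $(-1)^{k-1}\sum\binom{s+a_k-1}{a_k}\zeta(a_1+1,\ldots,a_{k-1}+1;a_k+s)$, the factor $1/a_k!$ from $(\log x)^{a_k}/a_k!$ cancelling the $a_k!$ from $\Gamma(s+a_k)/\Gamma(s)$, and each correction piece becomes $(-1)^j\zeta(\underbrace{1,\ldots,1}_{r-1},k-j)\,\zeta(\underbrace{1,\ldots,1}_{j};s)$.

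I expect the main obstacle to be a clean and rigorous derivation of the reflection formula, in particular pinning down its constant (correction) terms. The iterated-integral substitution $x\mapsto1-x$ interchanges the forms $dx/x$ and $dx/(1-x)$, both singular at the relevant endpoints, so the naive manipulation is divergent and must be carried out with shuffle (tangential) regularisation; it is precisely this regularisation that generates the boundary constants $\zeta(\underbrace{1,\ldots,1}_{r-1},k-j)$ and fixes the summation range. An equivalent route, which I would keep as a check, is to start from the simplex representation $\xi(\underbrace{1,\ldots,1}_{r-1},k;s)=\frac{1}{\Gamma(s)\,r!}\int_{0<v_1<\cdots<v_k}v_1^{r}\,v_k^{s-1}\prod_{i=1}^k(e^{v_i}-1)^{-1}\,dv_i$, obtained from the iterated-integral form of $\Li_{\underbrace{1,\ldots,1}_{r-1},k}$ under $x=1-e^{-u}$, and integrate by parts in the outer variable; there the same divergence reappears as a divergent boundary term at $v_k=\infty$ whose cancellation against the remaining integral produces the correction terms. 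Either way, the analytic continuation in $s$ and the careful treatment of the $t\to\infty$ boundary are where the real work lies.
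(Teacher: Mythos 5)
Your strategy --- rewrite $\Li_{1,\ldots,1,k}(1-e^{-t})$ via an Euler-type connection formula in terms of polylogarithms at $e^{-t}$ and powers of $t$, then integrate term by term --- is viable, and your first lemma (the evaluation producing $\binom{s+a-1}{a}\zeta(l_1,\ldots,l_p;s+a)$ from $t^{s+a-1}\Li_{l_1,\ldots,l_p}(e^{-t})/(e^t-1)$) is correct. This is in spirit the paper's route to the \emph{general} answer (Lemma \ref{eulcon} and Theorem \ref{xibyzeta}). However, the proof of \eqref{AK-thm-8} itself, whose method the paper reproduces for the level-$2$ analogue Theorem \ref{Th-6-4}, is organized differently: one introduces the simplex integrals $I_\nu^{(r,k)}(s)$, computes $I_1^{(r,k)}(s)$ once by expanding $(x_1+\cdots+x_k)^r$ multinomially (giving the main sum) and once by telescoping integration by parts in $x_\nu$ (giving the products of zeta values plus $(-1)^{k-1}\xi$). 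That organization exists precisely to sidestep what you identify as your main obstacle: the constants $\zeta(1,\ldots,1,k-j)$ arise automatically as boundary evaluations, so no standalone connection formula ever needs to be stated.

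As written, your proposal therefore has a genuine gap at its central step, and the formula you assert is moreover not quite right: the logarithm power must be $(-\log x)^{a_k}/a_k!=\Li_{\underbrace{1,\ldots,1}_{a_k}}(1-x)$, not $(\log x)^{a_k}/a_k!$. Already for $r=1$, $k=2$ your version reads $\Li_2(1-x)=-\Li_2(x)+\log x\log(1-x)+\zeta(2)$, contradicting \eqref{equ-6-2}, and for $k=1$ it gives $(\log x)^r/r!$ instead of $(-\log x)^r/r!$; with your sign the substitution $x=e^{-t}$ produces $(-t)^{a_k}$ and hence a spurious $(-1)^{a_k}$ in the main sum of \eqref{AK-thm-8}. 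The corrected identity
\begin{align*}
\Li_{\underbrace{1,\ldots,1}_{r-1},k}(1-x)&=(-1)^{k-1}\!\!\sum_{a_1+\cdots+a_k=r\atop \forall a_j\ge0}\frac{(-\log x)^{a_k}}{a_k!}\,\Li_{a_1+1,\ldots,a_{k-1}+1}(x)\\
&\qquad+\sum_{j=0}^{k-2}(-1)^j\,\zeta(\underbrace{1,\ldots,1}_{r-1},k-j)\,\frac{(-\log(1-x))^j}{j!}
\end{align*}
does hold and completes your argument, but you still owe a proof of it. Note that no shuffle or tangential regularisation is actually needed: one can argue by induction on $k$, differentiating in $x$ (the $dx/x$-contributions cancel in pairs between adjacent compositions $(\ldots,a_{k-1},a_k)$ and $(\ldots,a_{k-1}-1,a_k+1)$, leaving $-\tfrac{1}{1-x}$ times the case $k-1$) and fixing the constant of integration at $x\to0$, where the left-hand side tends to $\zeta(1,\ldots,1,k)$ and every term on the right except $j=0$ vanishes. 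This is exactly the mechanism of the paper's Lemma \ref{L-6-1} for the dual family $(2,1,\ldots,1)$. So the plan is salvageable, but the key lemma must be corrected and proved rather than asserted.
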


Concerning a generalization of this result, Arakawa and the first named author posed the following question. 

\begin{problem}[\cite{AK1999}\ \S8,\ Problem (i)]\label{Problem-1}
For a general index set $(k_1,\ldots,k_r)$, is the function $\xi(k_1,\ldots,k_r;s)$ also expressed by multiple zeta functions 
as in Theorem \ref{AK-Th8} stated above?
\end{problem}

An affirmative answer was given in \cite{KT}. To describe it, we consider an Euler-type connection formula 
for the multiple polylogarithm.

\begin{lemma}[\cite{KT}\ Lemma 3.5]\label{eulcon} Let ${\bf k}$ be any index.  Then we have
\begin{equation}
\Li_{\bf k}(1-z)=\sum_{{\bf k'},\,j\ge0}c_{\bf k}({\bf k'};j)
\Li_{{\scriptsize{\underbrace{1,\ldots,1}_j}}}(1-z)\Li_{\bf k'}(z),\label{euler}
\end{equation}
where the sum on the right-hand side runs over indices ${\bf k'}$ and integers $j\ge0$ that satisfy
$\vert{\bf k'}\vert+j\le \vert{\bf k}\vert$, and $c_{\bf k}({\bf k'};j)$ is a $\mathbb{Q}$-linear combination
of multiple zeta values of weight $\vert{\bf k}\vert-\vert{\bf k'}\vert-j$. We understand
$\Li_{\emptyset}(z)=1$ and $\vert\emptyset\vert=0$ for the empty index $\emptyset$, and the constant $1$ is interpreted as 
a multiple zeta value of weight $0$.
\end{lemma}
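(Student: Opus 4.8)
The plan is to induct on the weight $N=\vert{\bf k}\vert$, exploiting the involutivity of $z\mapsto 1-z$ together with the elementary differential relations for the multiple polylogarithm. Writing ${\bf k}=(\kk)$, recall
\[
\frac{d}{dz}\Li_{\bf k}(z)=\frac1z\Li_{k_1,\ldots,k_r-1}(z)\quad(k_r\ge2),\qquad
\frac{d}{dz}\Li_{\bf k}(z)=\frac1{1-z}\Li_{k_1,\ldots,k_{r-1}}(z)\quad(k_r=1).
\]
Put $\mathbf{1}_j=(\underbrace{1,\ldots,1}_{j})$, so that $\Li_{\mathbf1_j}(1-z)=(-\log z)^j/j!$ and $\frac{d}{dz}\Li_{\mathbf1_j}(1-z)=-\frac1z\Li_{\mathbf1_{j-1}}(1-z)$; these show that the products $\Li_{\mathbf1_j}(1-z)\Li_{{\bf k'}}(z)$ appearing on the right of \eqref{euler} are exactly the natural building blocks. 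The base case $N=0$ (empty index, both sides $1$) is trivial, and the case ${\bf k}=(1)$ reads $\Li_1(1-z)=-\log z=\Li_{\mathbf1_1}(1-z)$.

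For the inductive step I would set $f(z)=\Li_{\bf k}(1-z)$ and differentiate. By the chain rule and the relations above, $f'(z)$ equals $-\frac1{1-z}\Li_{k_1,\ldots,k_r-1}(1-z)$ when $k_r\ge2$ and $-\frac1z\Li_{k_1,\ldots,k_{r-1}}(1-z)$ when $k_r=1$; in either case the polylogarithm that appears has weight $N-1$, so the induction hypothesis rewrites it as a $\mathbb{Q}$-combination, with multiple zeta value coefficients, of products $\Li_{\mathbf1_j}(1-z)\Li_{{\bf k'}}(z)$. Hence $f'(z)$ is a combination of terms $\frac1z\,\Li_{\mathbf1_j}(1-z)\Li_{{\bf k'}}(z)$ and $\frac1{1-z}\,\Li_{\mathbf1_j}(1-z)\Li_{{\bf k'}}(z)$, and the task reduces to antidifferentiating each such term back into the same shape. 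This is carried out by integration by parts: since $\frac1z\Li_{\mathbf1_j}(1-z)=-\frac{d}{dz}\Li_{\mathbf1_{j+1}}(1-z)$ and $\frac1{1-z}\Li_{{\bf k'}}(z)=\frac{d}{dz}\Li_{{\bf k'},1}(z)$, each integration by parts moves one differential form onto the neighbouring factor and leaves an integral of strictly smaller complexity (decreasing either the remaining weight of $\Li_{{\bf k'}}(z)$ or the index $j$), so the procedure terminates and the $\mathbb{Q}$-span of the products $\Li_{\mathbf1_a}(1-z)\Li_{{\bf b}}(z)$ is closed under these antidifferentiations. A straightforward weight count shows that each extra $\frac1z$ or $\frac1{1-z}$ integrated contributes $1$ to $\vert{\bf b}\vert+a$, so all terms produced have $\vert{\bf k'}\vert+j\le N$ with coefficients of weight $N-\vert{\bf k'}\vert-j$, as required.

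It remains to fix the constant of integration $C=c_{\bf k}(\emptyset;0)$, i.e.\ the weight-$N$ term multiplying $\Li_\emptyset(1-z)\Li_\emptyset(z)=1$. I would determine it by letting $z\to1^-$: then $1-z\to0$, so $f(z)=\Li_{\bf k}(1-z)\to0$, while $\Li_{\mathbf1_j}(1-z)=(-\log z)^j/j!\to0$ for $j\ge1$ and $\to1$ for $j=0$, and $\Li_{{\bf k'}}(z)\to\zeta({\bf k'})$ when ${\bf k'}$ is admissible. Matching the two sides in this limit expresses $C$ as minus the sum of the surviving $j=0$ coefficients times the corresponding multiple zeta values, a $\mathbb{Q}$-linear combination of multiple zeta values of weight $N$; this is precisely where the genuinely new full-weight constants enter. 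Finally, products of multiple zeta values arising along the way collapse to $\mathbb{Q}$-linear combinations of multiple zeta values of the same weight via the shuffle (or harmonic) product, so every coefficient $c_{\bf k}({\bf k'};j)$ is of the stated form.

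The main obstacle is the behaviour at the singular endpoint $z=1$: for non-admissible ${\bf k'}$ (those ending in $1$) the factor $\Li_{{\bf k'}}(z)$ diverges as $z\to1$, and one must either show that these divergences cancel between the two sides or, equivalently, carry out the limit after a shuffle regularization, replacing the divergent $\Li_{{\bf k'}}(z)$ by their regularized values and the divergent powers of $\log z$ by the $\Li_{\mathbf1_j}(1-z)$. Conceptually this is cleanest in the iterated-integral picture: the substitution $t\mapsto1-t$ turns $\Li_{\bf k}(1-z)$ into an iterated integral from $z$ to $1$ with the two forms $dt/t$ and $dt/(1-t)$ interchanged, and decomposing the path $z\to1$ through the base point $0$ via the path-composition formula yields a sum of products of an integral from $z$ to $0$ (giving $\Li_{{\bf k'}}(z)$, with trailing $dt/t$ producing the $\log z$ powers) and an integral from $0$ to $1$ (giving the regularized multiple zeta value coefficients). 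Making this decomposition rigorous is exactly the regularization step identified above, and it is the only genuinely technical point in the argument.
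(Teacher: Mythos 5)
Your overall strategy --- induction on the weight $N=\vert{\bf k}\vert$, differentiating $\Li_{\bf k}(1-z)$, invoking the induction hypothesis, and integrating back with the constant fixed by a limit --- is exactly the strategy of the proof in \cite{KT} that the paper cites for this lemma, and it is the same differentiate-and-integrate induction the paper carries out for the special case Lemma \ref{L-6-1}. However, the step in which you close up the antidifferentiation does not work as written. Put $L_j(z)=(-\log z)^j/j!=\Li_{1,\ldots,1}(1-z)$ ($j$ ones). Your two integration-by-parts moves are: for $\int\frac{dz}{z}L_j(z)\Li_{{\bf k}'}(z)$, absorb $dz/z$ into $L_{j+1}$ and differentiate $\Li_{{\bf k}'}$ (this raises $j$ and lowers $\vert{\bf k}'\vert$); for $\int\frac{dz}{1-z}L_j(z)\Li_{{\bf k}'}(z)$, absorb $dz/(1-z)$ into $\Li_{{\bf k}',1}$ and differentiate $L_j$ (this lowers $j$ and raises $\vert{\bf k}'\vert$). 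Neither of the two quantities you propose as a complexity measure is monotone under both moves, and in fact the procedure cycles: for an index ending in $1$, say ${\bf k}'=({\bf c},1)$,
\begin{align*}
\int\frac{dz}{z}L_j(z)\Li_{{\bf c},1}(z)
&=-L_{j+1}(z)\Li_{{\bf c},1}(z)+\int\frac{dz}{1-z}L_{j+1}(z)\Li_{{\bf c}}(z)\\
&=-L_{j+1}(z)\Li_{{\bf c},1}(z)+L_{j+1}(z)\Li_{{\bf c},1}(z)+\int\frac{dz}{z}L_j(z)\Li_{{\bf c},1}(z),
\end{align*}
so the two moves are mutually inverse and return the original integral with the boundary terms cancelling. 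This is not an exotic case: already for ${\bf k}=(2)$ one has to integrate $\frac{dz}{1-z}L_1(z)$, and your rules send it to $\int\frac{dz}{z}L_0(z)\Li_1(z)$ and straight back.

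The repair is to organize the integration so that $j$ strictly decreases at every step: in both cases antidifferentiate the $\Li_{{\bf k}'}(z)$ factor, using $\frac{dz}{z}\Li_{{\bf k}'}(z)=d\Li_{{\bf k}''}(z)$ with ${\bf k}''$ equal to ${\bf k}'$ with its last entry raised by one (or $d(\log z)$ if ${\bf k}'=\emptyset$) and $\frac{dz}{1-z}\Li_{{\bf k}'}(z)=d\Li_{{\bf k}',1}(z)$, and differentiate $L_j$; each step then produces $\frac{dz}{z}L_{j-1}(z)$ times a polylogarithm, the base case $j=0$ being an exact form, and your weight bookkeeping goes through. A second point: your determination of the constant by letting $z\to1$ is not merely delicate for non-admissible ${\bf k}'$ --- terms with $j=0$ and non-admissible ${\bf k}'$ genuinely occur (already for ${\bf k}=(3)$ a term proportional to $\zeta(2)\Li_1(z)$ appears), so the limit only exists after the divergences are shown to cancel. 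You correctly identify shuffle regularization, or equivalently the path-composition formula for iterated integrals, as the way to do this, but you only sketch it; as it stands the proposal establishes the shape of the identity \eqref{euler} while leaving the full-weight constants $c_{\bf k}({\bf k}';0)$, the only genuinely new coefficients, unjustified.
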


From this, we can obtain formulas expressing $\xi(\kk;s)$ in terms of multiple zeta functions, which can be 
regarded as a general answer to the above problem. However, we should note that there are no closed 
formulas for the coefficients $c_{\bf k}({\bf k'};j)$, and we can only compute them inductively from low weights.

\begin{theorem}[\cite{KT}\ Theorem 3.6]\label{xibyzeta}  Let ${\bf k}$ be any index set.  
The function $\xi({\bf k};s)$ can be written 
in terms of multiple zeta functions as
\begin{equation}
\xi({\bf k};s)=\sum_{{\bf k'},\,j\ge0}c_{\bf k}({\bf k'};j)\binom{s+j-1}{j}\zeta({\bf k'};s+j). \label{xi-mzv}
\end{equation}
Here, the sum is over indices ${\bf k'}$ and integers $j\ge0$ satisfying 
$\vert{\bf k'}\vert+j\le \vert{\bf k}\vert$, and $c_{\bf k}({\bf k'};j)$ is a $\mathbb{Q}$-linear combination
of multiple zeta values of weight $\vert{\bf k}\vert-\vert{\bf k'}\vert-j$.  The index ${\bf k'}$ may
be $\emptyset$ and for this we set $\zeta(\emptyset;s+j)=\zeta(s+j)$.
\end{theorem}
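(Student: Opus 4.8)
The plan is to feed the Euler-type connection formula of Lemma \ref{eulcon} directly into the integral \eqref{xidef} defining $\xi({\bf k};s)$, specialized at $z=e^{-t}$, and then evaluate the resulting Mellin transforms term by term. The entire computation hinges on one clean observation, which I would record first as a lemma: the all-ones multiple polylogarithm at $1-e^{-t}$ is a pure power of $t$. Indeed, from $\Li_1(w)=-\log(1-w)$ together with the generating identity $\sum_{j\ge0}\Li_{\underbrace{1,\ldots,1}_j}(w)\,x^j=(1-w)^{-x}$ one obtains $\Li_{\underbrace{1,\ldots,1}_j}(w)=(-\log(1-w))^j/j!$, and setting $w=1-e^{-t}$ makes $-\log(1-w)=t$, so that
\[ \Li_{\underbrace{1,\ldots,1}_j}(1-e^{-t})=\frac{t^j}{j!}. \]
(Alternatively this follows by differentiating in $t$ and inducting on $j$.) Substituting $z=e^{-t}$ into \eqref{euler} and dividing by $e^t-1$ then gives the pointwise identity
\[ \frac{\Li_{\bf k}(1-e^{-t})}{e^t-1}=\sum_{{\bf k'},\,j\ge0}\frac{c_{\bf k}({\bf k'};j)}{j!}\,t^j\,\frac{\Li_{\bf k'}(e^{-t})}{e^t-1}, \]
the sum being finite by the weight constraint $\vert{\bf k'}\vert+j\le\vert{\bf k}\vert$.

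The second step is to identify each building block as a single-variable multiple zeta function. Expanding $\Li_{\bf k'}(e^{-t})=\sum_{1\le m_1<\cdots<m_r}e^{-m_rt}/(m_1^{k'_1}\cdots m_r^{k'_r})$ and $1/(e^t-1)=\sum_{n\ge1}e^{-nt}$, and collecting the exponential $e^{-mt}$ with $m=m_r+n>m_r$, I get
\[ \frac{\Li_{\bf k'}(e^{-t})}{e^t-1}=\sum_{1\le m_1<\cdots<m_r<m}\frac{e^{-mt}}{m_1^{k'_1}\cdots m_r^{k'_r}}. \]
Multiplying by $t^{s+j-1}$ and applying $\int_0^\infty t^{w-1}e^{-mt}\,dt=\Gamma(w)/m^w$ with $w=s+j$ yields, via the defining series \eqref{mzfct},
\[ \frac{1}{\Gamma(s)}\int_0^\infty t^{s+j-1}\frac{\Li_{\bf k'}(e^{-t})}{e^t-1}\,dt=\frac{\Gamma(s+j)}{\Gamma(s)}\,\zeta({\bf k'};s+j); \]
the case ${\bf k'}=\emptyset$ corresponds to $\Li_\emptyset(e^{-t})=1$ and reproduces the convention $\zeta(\emptyset;s+j)=\zeta(s+j)$. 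Since $\Gamma(s+j)/(j!\,\Gamma(s))=\binom{s+j-1}{j}$, summing over the finitely many pairs $({\bf k'},j)$ produces precisely the right-hand side of \eqref{xi-mzv}.

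The main obstacle is not the algebra but the analytic bookkeeping: the individual integrals above need not converge on all of $\Re s>0$. For example the ${\bf k'}=\emptyset$, $j=0$ term has integrand $\sim t^{s-2}$ near $t=0$ and converges only for $\Re s>1$, even though $\xi({\bf k};s)$ is entire. I would therefore first establish \eqref{xi-mzv} on a half-plane $\Re s\gg0$, where every term is absolutely convergent and the (finite) interchange of summation and integration is unproblematic, and then extend to all $s\in\mathbb{C}$ by analytic continuation: the left-hand side is entire, while each $\binom{s+j-1}{j}\zeta({\bf k'};s+j)$ on the right is meromorphic in $s$, so an identity valid on an open set persists wherever both sides are defined. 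A consequence worth noting is that the apparent poles of the single-variable multiple zeta functions must cancel in the sum, in accordance with $\xi({\bf k};s)$ being entire.
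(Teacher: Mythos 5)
Your proposal is correct and follows essentially the same route the paper indicates: substituting the Euler-type connection formula of Lemma \ref{eulcon} at $z=e^{-t}$ into the integral \eqref{xidef}, using $\Li_{1,\ldots,1}(1-e^{-t})=t^j/j!$ (the paper's identity $(\log z)^n=(-1)^n n!\,\Li_{1,\ldots,1}(1-z)$), and evaluating each Mellin transform as $\binom{s+j-1}{j}\zeta({\bf k'};s+j)$, exactly as in the worked instances \eqref{equ-6-3} and Theorem \ref{Th-6-2}. Your explicit attention to the half-plane of convergence and the final analytic continuation is a sound and welcome addition to what the paper leaves implicit.
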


As an example, we used the identity 
\begin{equation}
{\rm Li}_{2,1}(1-z)=2{\rm Li}_3(z)-\log z\cdot {\rm Li}_2(z)-\zeta(2)\log z-2\zeta(3), \label{equ-6-1}
\end{equation}
obtained by integrating the well-known 
\begin{equation}
{\rm Li}_{2}(1-z)+{\rm Li}_2(z)=\zeta(2)-\log z\log(1-z). \label{equ-6-2}
\end{equation}
Applying \eqref{equ-6-1} to the definition of $\xi$ in \eqref{xidef}, we obtained
\begin{equation}
\xi(2,1;s)=2\zeta(3;s)+s\zeta(2;s+1)+\zeta(2)s\zeta(s+1)-2\zeta(3)\zeta(s). \label{equ-6-3}
\end{equation}

Lemma \ref{eulcon} (and its proof in \cite{KT}) gives an inductive way to compute the functional equation under  $z \mapsto 1-z$.
Here we give a further example which implies a multiple version of \eqref{equ-6-3}. The following identity is an example of 
Lemma~\ref{eulcon} because 
$(\log z)^n=(-1)^nn!\,{\rm Li}_{\tiny \underbrace{1,\ldots,1}_{n}}(1-z)$ (see e.g. \cite[Lemma 1]{AK1999}).

\begin{lemma}\label{L-6-1}\ For $r\in \mathbb{Z}_{\geq 0}$ and $0<z<1$, 
\begin{align}\label{equ-6-4}
& (-1)^r {\rm Li}_{2,{\tiny \underbrace{1,\ldots,1}_{r}}}(1-z)\\
& = -(r+1){\rm Li}_{r+2}(z)+(\log z){\rm Li_{r+1}}(z)\notag\\
& \quad +\sum_{j=0}^{r}\frac{r-j+1}{j!}\zeta(r-j+2)(\log  z)^j.\notag
\end{align}
\end{lemma}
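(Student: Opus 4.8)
The plan is to prove \eqref{equ-6-4} by induction on $r$, using the elementary differentiation rule for the multiple polylogarithm. Write $F_r(z):={\rm Li}_{2,\underbrace{1,\ldots,1}_{r}}(1-z)$ for the function appearing, up to sign, on the left-hand side. Since the index $(2,\underbrace{1,\ldots,1}_{r})$ ends in $1$, differentiation deletes the final $1$ and produces the factor $1/(1-w)$, namely $\frac{d}{dw}{\rm Li}_{2,\underbrace{1,\ldots,1}_{r}}(w)=\frac{1}{1-w}{\rm Li}_{2,\underbrace{1,\ldots,1}_{r-1}}(w)$. Substituting $w=1-z$ and applying the chain rule gives the clean recursion
\begin{equation*}
F_r'(z)=-\frac{1}{z}\,F_{r-1}(z)\qquad(r\ge1),
\end{equation*}
so that $G_r(z):=(-1)^rF_r(z)$, the left-hand side of \eqref{equ-6-4}, satisfies $G_r'(z)=\frac{1}{z}\,G_{r-1}(z)$.

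For the base case $r=0$ the claim reduces to ${\rm Li}_2(1-z)=-{\rm Li}_2(z)+(\log z){\rm Li}_1(z)+\zeta(2)$, which is exactly \eqref{equ-6-2} after using ${\rm Li}_1(z)=-\log(1-z)$. For the inductive step I denote the right-hand side of \eqref{equ-6-4} by $H_r(z)$ and differentiate it, using $\frac{d}{dz}{\rm Li}_k(z)=\frac{1}{z}{\rm Li}_{k-1}(z)$ for $k\ge2$ and the Leibniz rule on $(\log z){\rm Li}_{r+1}(z)$. The key cancellation is that the term $\frac{1}{z}{\rm Li}_{r+1}(z)$ produced by differentiating the logarithm combines with the $-(r+1)\frac{1}{z}{\rm Li}_{r+1}(z)$ coming from the first summand to leave $-r\,\frac{1}{z}{\rm Li}_{r+1}(z)$; after reindexing the differentiated finite sum by $i=j-1$, one checks term by term that $H_r'(z)=\frac{1}{z}H_{r-1}(z)$, so that $H_r$ obeys the same recursion as $G_r$.

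Assuming the inductive hypothesis $G_{r-1}=H_{r-1}$, the two sides of \eqref{equ-6-4} then have equal derivatives, so $G_r-H_r$ is constant on $(0,1)$. To pin down the constant I let $z\to1^-$: the left-hand side tends to $(-1)^r{\rm Li}_{2,\underbrace{1,\ldots,1}_{r}}(0)=0$, while on the right every term carrying a factor $\log z$ vanishes and only the $j=0$ term of the sum survives, giving $-(r+1)\zeta(r+2)+(r+1)\zeta(r+2)=0$ once ${\rm Li}_{r+2}(1)=\zeta(r+2)$ is used. Hence the constant is $0$ and $G_r=H_r$, which completes the induction. I expect the only delicate point to be the bookkeeping in the inductive differentiation step, namely matching the index shifts of the polylogarithms and the reindexing of the $\zeta$-sum against $H_{r-1}$ exactly, together with checking continuity up to $z=1$ so that the boundary evaluation is legitimate; the structural recursion itself is immediate from the differentiation rule.
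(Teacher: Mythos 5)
Your proof is correct and follows essentially the same route as the paper's: induction on $r$ with base case \eqref{equ-6-2}, matching the derivative of the right-hand side against $(-1)^r\frac{d}{dz}{\rm Li}_{2,1,\ldots,1}(1-z)$ via the induction hypothesis, and fixing the constant of integration by letting $z\to1^-$. You simply carry out the differentiation and reindexing more explicitly than the paper does, so no changes are needed.
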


\begin{proof}
We proceed by induction on $r$. When $r=0$, \eqref{equ-6-4} is nothing but \eqref{equ-6-2}. For the case $r\geq 1$, 
if we differentiate the right-hand side of \eqref{equ-6-4}, the result is equal to 
$$(-1)^{r-1} {\rm Li}_{2,{\tiny \underbrace{1,\ldots,1}_{r-1}}}(1-z)\frac{1}{z}=(-1)^r\frac{d}{dz}{\rm Li}_{2,{\tiny \underbrace{1,\ldots,1}_{r}}}(1-z),$$
by the induction hypothesis for the case of $r-1$. Hence the assertion for the case of $r$
follows from integration,  by noting the both sides of \eqref{equ-6-4} tend to 0 when $z\to 1$. 
Thus we complete the proof.
\end{proof}

Applying \eqref{equ-6-4} with $z=e^{-t}$ $(t>0)$ to \eqref{xidef}, we obtain the following generalization of \eqref{equ-6-3}.

\begin{theorem}\label{Th-6-2}\ For $r\in \mathbb{Z}_{\geq 1}$, 
\begin{align} \label{equ-6-5}
& (-1)^r\xi(2,\underbrace{1,\ldots,1}_{r};s) \\
& = -(r+1)\zeta(r+2;s)-s\zeta(r+1;s+1)\notag\\
& +\sum_{j=0}^{r}(-1)^j(r-j+1)\zeta(r-j+2)\binom{s+j-1}{j}\zeta(s+j).\notag
\end{align}
\end{theorem}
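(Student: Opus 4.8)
The plan is to feed the functional identity of Lemma~\ref{L-6-1} directly into the integral representation \eqref{xidef} of $\xi$ and to evaluate the resulting integrals one at a time against elementary Mellin transforms. First I would set $z=e^{-t}$ in \eqref{equ-6-4}, so that $1-z=1-e^{-t}$ and $\log z=-t$; this rewrites $(-1)^r\Li_{2,\underbrace{\scriptstyle 1,\ldots,1}_{r}}(1-e^{-t})$ as the sum of three pieces: $-(r+1)\Li_{r+2}(e^{-t})$, the term $-t\,\Li_{r+1}(e^{-t})$, and the finite polynomial $\sum_{j=0}^{r}(-1)^j\frac{r-j+1}{j!}\zeta(r-j+2)\,t^{j}$. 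Multiplying through by $t^{s-1}/((e^{t}-1)\Gamma(s))$ and integrating over $(0,\infty)$ then splits $(-1)^r\xi(2,\underbrace{\scriptstyle 1,\ldots,1}_{r};s)$ into three groups of integrals, the interchange of summation and integration being legitimate for $\Re s$ large by absolute convergence.

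The key computation is the Mellin transform
\[
\frac{1}{\Gamma(s)}\int_0^\infty t^{s-1}\frac{\Li_{k}(e^{-t})}{e^{t}-1}\,dt=\zeta(k;s),
\]
obtained by expanding $\Li_{k}(e^{-t})=\sum_{n\ge1}e^{-nt}/n^{k}$ and $1/(e^{t}-1)=\sum_{m\ge1}e^{-mt}$, using $\int_0^\infty t^{s-1}e^{-Nt}\,dt=\Gamma(s)N^{-s}$, and regrouping by $N=n+m>n$ so as to recover \eqref{mzfct} in depth one. Applied with $k=r+2$ this evaluates the first group as $-(r+1)\zeta(r+2;s)$. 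For the middle term the extra factor $t$ promotes $t^{s-1}$ to $t^{(s+1)-1}$, so the same transform with $k=r+1$ and $s$ replaced by $s+1$, together with $\Gamma(s+1)/\Gamma(s)=s$, produces $-s\,\zeta(r+1;s+1)$.

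For the polynomial terms I would use $\frac{1}{\Gamma(s)}\int_0^\infty t^{s+j-1}(e^{t}-1)^{-1}\,dt=\frac{\Gamma(s+j)}{\Gamma(s)}\zeta(s+j)$ together with the elementary identity $\Gamma(s+j)/\Gamma(s)=j!\,\binom{s+j-1}{j}$, which turns the $j$-th summand into $(-1)^j(r-j+1)\zeta(r-j+2)\binom{s+j-1}{j}\zeta(s+j)$; summing over $0\le j\le r$ reproduces exactly the last sum in \eqref{equ-6-5}. Assembling the three contributions establishes \eqref{equ-6-5} for $\Re s$ sufficiently large, and since $\xi$ is entire while each term on the right is entire or meromorphic in $s$, the identity persists for all $s$ by analytic continuation.

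The only genuine subtlety is the analytic bookkeeping: one must check that the term-by-term integration converges before taking limits (near $t=0$ the factor $(e^{t}-1)^{-1}\sim t^{-1}$ forces one to work with $\Re s$ large and then continue), and one must match the Gamma-quotient $\Gamma(s+j)/\Gamma(s)$ correctly with the binomial coefficient $\binom{s+j-1}{j}$. Neither step is deep; the substance of the theorem is already carried by Lemma~\ref{L-6-1}, so the remaining work is careful verification rather than a new idea.
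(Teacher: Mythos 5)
Your proposal is correct and is exactly the route the paper takes: the paper derives Theorem~\ref{Th-6-2} by ``applying \eqref{equ-6-4} with $z=e^{-t}$ to \eqref{xidef}'' and leaves the term-by-term Mellin computations (which you carry out explicitly, including the identification $\Gamma(s+j)/\Gamma(s)=j!\binom{s+j-1}{j}$ and the final analytic continuation) to the reader. No gaps; your write-up simply supplies the details the paper omits.
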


\begin{example}\label{Ex-6-3}
The case $r=1$ is \eqref{equ-6-3} and the case $r=2$ is
\begin{align*}
\xi(2,1,1;s)& =-3\zeta(4;s)+3\zeta(4)\zeta(s)-s\zeta(3;s+1)\\
& \ \ -2s\zeta(3)\zeta(s+1)+\frac{s(s+1)}{2}\zeta(2)\zeta(s+2).
\end{align*}
These coincide with the formula in \cite[Example 3.8]{KT}.
\end{example}

\section{The function $\eta(\kk;s)$ for nonpositive indices and related topics}\label{sec-4}

In this section, we consider multi-polylogarithms with nonpositive indices. 

\begin{lemma}[\cite{KT}\ Lemma 4.1]\label{L-4-1} \ For $k_1,\ldots,k_r\in \mathbb{Z}_{\geq 0}$, there exists a polynomial 
$P(x;\kk)\in \mathbb{Z}[x]$ such that
\begin{align}
& \Li_{\km}(z)=\frac{P(z;\kk)}{(1-z)^{k_1+\cdots+k_r+r}}, \label{4-1}\\
& {\rm deg}\,P(x;\kk)\label{4-2}\\
& =
\begin{cases}
r & (k_1=\cdots=k_r=0)\\
k_1+\cdots+k_r+r-1 & (\text{\rm otherwise}),
\end{cases}
\notag\\
& x^r\mid P(x;\kk).\label{4-3}
\end{align}
Specifically, $P(x;\underbrace{0,0,\ldots,0}_{r})=x^r$.
\end{lemma}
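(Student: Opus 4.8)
The plan is to prove the three assertions \eqref{4-1}--\eqref{4-3} simultaneously, by a double induction whose outer variable is the depth $r$ and whose inner variable is the last entry $k_r$. The engine consists of two elementary recurrences for $\Li_{\km}(z)=\sum_{1\le m_1<\cdots<m_r} m_1^{k_1}\cdots m_r^{k_r}z^{m_r}$. Differentiating this series term by term shows that $z\,d/dz$ lowers the last index by one, giving
\[
z\frac{d}{dz}\Li_{-k_1,\ldots,-k_{r-1},-(k_r-1)}(z)=\Li_{\km}(z)\qquad(k_r\ge1),
\]
while summing the geometric tail over $m_r>m_{r-1}$ when $k_r=0$ gives the append-a-zero relation
\[
\Li_{-k_1,\ldots,-k_{r-1},0}(z)=\frac{z}{1-z}\,\Li_{-k_1,\ldots,-k_{r-1}}(z).
\]
Together these reduce any nonpositive index either to one of the same depth with smaller last entry, or to one of strictly smaller depth, so they drive the whole induction.

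For the base case $r=1$ I would invoke the classical Eulerian-polynomial evaluation $\Li_{-k}(z)=A_k(z)/(1-z)^{k+1}$, with $A_k\in\mathbb{Z}[z]$ of degree $k$ and divisible by $z$ (for $k=0$ this is simply $z/(1-z)$); this matches all three claims at depth one. For $r\ge2$ with $k_r=0$, substituting the depth-$(r-1)$ formula into the append-a-zero relation yields $P(x;k_1,\ldots,k_{r-1},0)=x\,P(x;k_1,\ldots,k_{r-1})$ over $(1-z)^{|{\bf k}|+r}$; the extra factor $x$ raises the degree bound by one and promotes $x^{r-1}\mid$ to $x^r\mid$, and in particular specializes to $P(x;0,\ldots,0)=x\cdot x^{r-1}=x^r$. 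For $r\ge2$ with $k_r\ge1$, writing $\Li_{-k_1,\ldots,-(k_r-1)}(z)=Q(z)/(1-z)^{N}$ with $N=|{\bf k}|+r-1$ and applying $z\,d/dz$ gives
\[
P(x;\kk)=x\bigl[(1-x)Q'(x)+N\,Q(x)\bigr],
\]
again over $(1-z)^{|{\bf k}|+r}$. Integrality is immediate throughout, since each recurrence only multiplies by $x$, differentiates, or scales by the integer $N$; and the divisibility $x^r\mid P$ follows because $x^r\mid Q$ forces $x^{r-1}$ into the bracket, leaving one factor of $x$ out front.

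The main obstacle, and the only place where the exact degree formula is forced, is controlling the degree of the bracket $(1-x)Q'+NQ$ in the differential step. Generically the index for $Q$ lies in the ``otherwise'' branch, so $\deg Q=|{\bf k}|+r-2$ with some leading coefficient $c$; then $(1-x)Q'$ and $NQ$ both have degree $\deg Q$, and the coefficient of $x^{\deg Q}$ in their sum is $-c\,\deg Q+Nc=c\,(N-\deg Q)=c\ne0$, precisely because $N-\deg Q=1$. Hence the bracket has degree $|{\bf k}|+r-2$ and $P$ has degree $|{\bf k}|+r-1$, as required. The one borderline subcase $k_1=\cdots=k_{r-1}=0$, $k_r=1$ (where the ``otherwise'' branch first begins, and where $Q=x^r$ is the all-zeros polynomial of degree $r$) must be checked by hand: there $N=r$, the top terms do cancel, the bracket collapses to $r\,x^{r-1}$, and $P=r\,x^r$ has degree $r=|{\bf k}|+r-1$, consistent with the claim. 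Verifying that this leading-coefficient bookkeeping is watertight across both branches is the crux; once it is in place, the denominator power and the two combinatorial invariants propagate automatically through the two recurrences.
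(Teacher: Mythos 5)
Your proposal is correct: the two recurrences (applying $z\,\frac{d}{dz}$ to lower the last entry, and the geometric-tail relation $\Li_{-k_1,\ldots,-k_{r-1},0}(z)=\frac{z}{1-z}\Li_{-k_1,\ldots,-k_{r-1}}(z)$ to drop the depth) do drive a clean double induction, and you correctly isolate the one delicate point, namely the possible cancellation of top-degree terms in $(1-x)Q'+NQ$, which occurs exactly at the index $(0,\ldots,0,1)$ and still yields the claimed degree. The present paper states this lemma without proof, quoting it from [KT, Lemma 4.1] (only the $r=1$ case is attributed to Shimura); your argument is essentially the standard inductive proof given there, so there is nothing to flag.
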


The case of $r=1$ is well-known (see, for example, Shimura \cite[Equations (2.17),\,(4.2) and (4.6)]{Shimura}). For example, 
$$\Li_0(z)=\frac{z}{1-z}, \quad \Li_{-1}(z)=\frac{z}{(1-z)^2}.$$
However, even if we apply this definition to \eqref{xidef} as well as in the case of positive indices, 
we cannot define the function $\xi$ with nonpositive indices. In fact, if we set, for example,
\begin{align*}
& \xi_0(s)=\frac{1}{\Gamma(s)}\int_{0}^\infty t^{s-1}\frac{\Li_0(1-e^{-t})}{e^t-1}dt=\frac{1}{\Gamma(s)}\int_{0}^\infty t^{s-1}dt,\\
& \xi_{-1}(s)=\frac{1}{\Gamma(s)}\int_{0}^\infty t^{s-1}\frac{\Li_{-1}(1-e^{-t})}{e^t-1}dt=\frac{1}{\Gamma(s)}\int_{0}^\infty t^{s-1}e^{t} dt,
\end{align*}
we see that these integrals are divergent for any $s\in \mathbb{C}$.

On the other hand, we can define the function $\eta$ with nonpositive indices as follows.

\begin{defn}\label{Def-Main-2} 
For $k_1,\ldots,k_r\in \mathbb{Z}_{\geq 0}$, define 
\begin{equation}
\eta(\km;s)=\frac{1}{\Gamma(s)}\int_{0}^\infty t^{s-1}\frac{\Li_{\km}(1-e^t)}{1-e^t}dt \label{4-5}
\end{equation}
for $s\in \mathbb{C}$ with ${\rm Re}(s)>1-r$. In the case $r=1$, denote $\eta(-k;s)$ by $\eta_{-k}(s)$.
\end{defn}

We can easily check that the integral on the right-hand side of \eqref{4-5} is absolutely convergent for 
${\rm Re}(s)>1-r$. 
Similar to the case with positive indices, we can see that $\eta(\km;s)$ can be analytically continued to 
an entire function on the whole complex plane, and satisfies 
\begin{equation}
\eta(\km;-m)=\bb_{m}^{(\km)}\quad (m\in \mathbb{Z}_{\geq 0}) \label{4-6}
\end{equation}
for $k_1,\ldots,k_r\in \mathbb{Z}_{\geq 0}$. 
In particular when $r=1$, we have
\begin{equation}
\eta_{-k}(-m)=\bb_{m}^{(-k)}\quad (k\in \mathbb{Z}_{\geq 0},\ m\in \mathbb{Z}_{\geq 0}). \label{eta-negative}
\end{equation}

Furthermore, we modify the definition \eqref{xidef} as follows.

\begin{defn}\label{Def-xi-tilde} 
For $k_1,\ldots,k_r\in \mathbb{Z}_{\geq 0}$ with $(k_1,\ldots,k_r)\neq (0,\ldots,0)$, define 
\begin{equation}
\widetilde{\xi}(\km;s)=\frac{1}{\Gamma(s)}\int_{0}^\infty t^{s-1}\frac{\Li_{\km}(1-e^t)}{e^{-t}-1}dt \label{4-5-2}
\end{equation}
for $s\in \mathbb{C}$ with ${\rm Re}(s)>1-r$. In the case $r=1$, denote $\widetilde{\xi}(-k;s)$ by 
$\widetilde{\xi}_{-k}(s)$ for $k\geq 1$.
\end{defn}

We see that $\widetilde{\xi}(\km;s)$ can be analytically continued to an entire function on the whole complex plane, 
and satisfies 
\begin{equation}
\widetilde{\xi}(\km;-m)=\cc_{m}^{(\km)}\quad (m\in \mathbb{Z}_{\geq 0}) \label{4-6-2}
\end{equation}
for $k_1,\ldots,k_r\in \mathbb{Z}_{\geq 0}$ with $(k_1,\ldots,k_r)\neq (0,\ldots,0)$. 
In particular, $\widetilde{\xi}_{-k}(-m)=\cc_{m}^{(-k)}$ $(k\in \mathbb{Z}_{\geq 1},\ m\in \mathbb{Z}_{\geq 0})$. 

\begin{remark}
Note that we cannot define $\widetilde{\xi}(\kk;s)$ by replacing $(\km)$ with $(\kk)$ in \eqref{4-5-2}. In fact, if we set, for example,
\begin{equation*}
\widetilde{\xi}_1(s)=\frac{1}{\Gamma(s)}\int_{0}^\infty t^{s-1}\frac{\Li_{1}(1-e^t)}{e^{-t}-1}dt=s\zeta(s+1)+\frac{1}{\Gamma(s)}\int_{0}^\infty t^{s}dt,
\end{equation*}
which is not convergent for any $s\in \mathbb{C}$.
\end{remark} 

Here we extend definitions of poly-Bernoulli numbers \eqref{1-1} and \eqref{1-2} as follows. For $s\in \mathbb{C}$, 
we define
\begin{align}
&\frac{{\rm Li}_{s}(1-e^{-t})}{1-e^{-t}}=\sum_{n=0}^\infty \bb_n^{(s)}\frac{t^n}{n!},  \label{ex-1-1}\\
&\frac{{\rm Li}_{s}(1-e^{-t})}{e^t-1}=\sum_{n=0}^\infty \cc_n^{(s)}\frac{t^n}{n!},  \label{ex-1-2}
\end{align}
where 
\begin{equation}
{\rm Li}_{s}(z)=\sum_{m=1}^\infty \frac{z^m}{m^s}\quad (|z|<1). \label{e-1-3}
\end{equation}
Using 
\begin{equation}
\frac{e^{x}(1-e^t)}{1-e^{x}(1-e^t)}
=\sum_{k=0}^\infty\Li_{-k}(1-e^t)\frac{x^{k}}{k!}, \label{4-10}
\end{equation}
we have the following.

\begin{theorem}[\cite{KT}\ Theorem 4.7]\label{Th-4-6}\ \ For $k \in \mathbb{Z}_{\geq 0}$, 
\begin{equation}
\eta(-k;s)=B_{k}^{(s)}.   \label{4-8}
\end{equation}
\end{theorem}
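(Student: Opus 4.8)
The plan is to establish the identity by an exponential generating-function argument in an auxiliary variable $x$: I will show that both $\sum_{k\ge0}\eta(-k;s)\,x^k/k!$ and $\sum_{k\ge0}B_k^{(s)}\,x^k/k!$ coincide with the single Mellin integral
\[
I(x;s):=\frac{1}{\Gamma(s)}\int_0^\infty\frac{t^{s-1}}{e^{t}+e^{-x}-1}\,dt,
\]
for $\Re s>1$ and $x$ near $0$. Comparing the coefficients of $x^k/k!$ then yields $\eta(-k;s)=B_k^{(s)}$ for $\Re s>1$; since, for each fixed $k$, the quantity $B_k^{(s)}$ is a finite Dirichlet sum $\sum_{m=1}^{k+1}c_{k,m}\,m^{-s}$ with $c_{k,m}\in\mathbb{Q}$ (because $(1-e^{-t})^{m-1}=O(t^{m-1})$, so only $m\le k+1$ contribute to the coefficient of $t^k$ in \eqref{ex-1-1}), it is entire in $s$, and $\eta(-k;s)$ is entire by the discussion following Definition \ref{Def-Main-2}; hence the identity extends to all $s\in\mathbb{C}$ by analytic continuation. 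Working on $\Re s>1$ at the outset is convenient because it makes the integral estimates below elementary.

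The $B$-side is immediate from the definition: by \eqref{ex-1-1} the series $\sum_{k\ge0}B_k^{(s)}\,x^k/k!$ is exactly $\Li_s(1-e^{-x})/(1-e^{-x})$. To match it with $I(x;s)$ I would factor $e^{t}+e^{-x}-1=e^{t}\bigl(1-(1-e^{-x})e^{-t}\bigr)$ and, using $0<1-e^{-x}<1$ for small $x>0$, expand the geometric series in $(1-e^{-x})e^{-t}$. Integrating term by term via $\int_0^\infty t^{s-1}e^{-mt}\,dt=\Gamma(s)\,m^{-s}$ produces $I(x;s)=\sum_{m\ge1}(1-e^{-x})^{m-1}m^{-s}=\Li_s(1-e^{-x})/(1-e^{-x})$. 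This is a routine Mellin computation.

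For the $\eta$-side I would insert the integral representation \eqref{4-5} (case $r=1$) into $\sum_{k\ge0}\eta(-k;s)\,x^k/k!$ and interchange summation and integration, so that the inner sum becomes $\sum_{k\ge0}\Li_{-k}(1-e^{t})\,x^k/k!$. The generating identity \eqref{4-10} collapses this to $e^{x}(1-e^{t})/\bigl(1-e^{x}(1-e^{t})\bigr)$; dividing by $1-e^{t}$ and simplifying $1-e^{x}(1-e^{t})=e^{x}\bigl(e^{-x}+e^{t}-1\bigr)$ turns the integrand into $t^{s-1}/(e^{t}+e^{-x}-1)$, which is precisely the integrand of $I(x;s)$.

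The only genuinely technical point, and the step I expect to be the main obstacle, is the justification of this last interchange of $\sum_k$ and $\int_0^\infty dt$. Regarded as a power series in $x$, the series in \eqref{4-10} is the Taylor expansion at $x=0$ of the rational function $e^{x}(1-e^{t})/(1-e^{x}(1-e^{t}))$, whose poles occur where $e^{x}=1/(1-e^{t})$; since $1-e^{t}<0$ for $t>0$, these poles satisfy $\lvert\Im x\rvert\ge\pi$ and hence lie at distance $\ge\pi$ from the origin. Consequently the series converges for $\lvert x\rvert<\pi$ uniformly in $t\in(0,\infty)$ (the sum $\sum_{k\ge0}\Li_{-k}(1-e^{t})\,x^k/k!=e^{x}(1-e^{t})/(1-e^{x}(1-e^{t}))$ being bounded on $(0,\infty)$, with limits $0$ as $t\to0$ and $-1$ as $t\to\infty$), so in particular its partial sums are bounded uniformly in $t$ and $N$, say by a constant $C=C(x)$. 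For $\Re s>1$ the resulting estimate $\lvert t^{s-1}(1-e^{t})^{-1}\rvert\,C\le C\,t^{\Re s-1}/(e^{t}-1)$ is integrable over $(0,\infty)$, so dominated convergence applied to the partial sums legitimizes the interchange; the closed form $\Li_{-k}(1-e^{t})=P(1-e^{t};k)\,e^{-(k+1)t}$ from Lemma \ref{L-4-1} can be used to make these boundedness claims explicit. Everything else is formal manipulation.
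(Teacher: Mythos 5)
Your proposal is correct and is essentially the paper's own argument: the proof indicated in the text (and carried out in [KT, Theorem 4.7]) is precisely to sum $\eta(-k;s)x^k/k!$ under the integral, collapse the inner sum via the generating identity \eqref{4-10} to $1/(e^t+e^{-x}-1)$, and recognize the resulting Mellin integral as $\Li_s(1-e^{-x})/(1-e^{-x})$, the generating function of $B_k^{(s)}$ from \eqref{ex-1-1}. Your justification of the interchange (uniform-in-$t$ bounds for $|x|<\pi$ plus dominated convergence on $\Re s>1$, then analytic continuation using that $B_k^{(s)}$ is a finite Dirichlet polynomial) is a sound filling-in of the details the survey omits.
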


Setting $s=-n\in \mathbb{Z}_{\leq 0}$ in \eqref{4-8} 
and using \eqref{eta-negative}, 
we obtain the duality relation $B_{n}^{(-k)}=B_{k}^{(-n)}$ in \eqref{1-4}, which can be written as 
\begin{equation}
\eta_{-k}(-n)=\eta_{-n}(-k). \label{eta-dual}
\end{equation}
Similarly, we can prove that
\begin{equation}
\widetilde{\xi}_{-k-1}(-n)=\widetilde{\xi}_{-n-1}(-k)\quad (n,k\in \mathbb{Z}_{\geq 0}), \label{tildexi-dual0}
\end{equation}
namely the duality relation $C_{n}^{(-k-1)}=C_{k}^{(-n-1)}$ in \eqref{1-5}. 

On the other hand, for $n,k\in \mathbb{Z}_{\geq 1}$, we found experimentally the identities (\cite[Eq. (36)]{KT})
\begin{equation}
\eta_{k}(n)=\eta_{n}(k), \label{eta-dual-2}
\end{equation}
which was soon proved and generalized by Yamamoto \cite{Yamamoto}. In particular when $r=1$, he showed 
\begin{equation}
\eta_{u}(s)=\eta_{s}(u) \label{eta-dual-3}
\end{equation}
for $s,u\in \mathbb{C}$, where
\begin{equation}
\eta_u(s)=\frac{1}{\Gamma(s)}\int_{0}^\infty t^{s-1}\frac{\Li_{u}(1-e^t)}{1-e^t}dt\  (s,u\in \mathbb{C};\ \Re(s)>1), \label{Yamamoto-eta}
\end{equation}
which can be analytically continued to $(s,u)\in \mathbb{C}^2$. More recently Kawasaki and Ohno gave 
an alternative proof of \eqref{eta-dual-2} in \cite{KO}.

Inspired by Yamamoto's result, Komori and the second named author \cite{Ko-Tsu} consider a more general type 
of zeta function denoted by $\xi_D(u,s;y,w;g)$ $(u,s,y,w\in \mathbb{C};g\in GL(2,\mathbb{C}))$ which satisfies
\begin{equation}
\xi_D(u,s;y,w-1;g)=-\frac{1}{{\rm det}\,g}\xi_D(s,u;w,y-1;g^{-1}) \label{Ko-Tsu-dual}
\end{equation}
(see \cite[Theorem 4.3]{Ko-Tsu}). In particular, 
for $g_\eta=\begin{pmatrix} -1 & 1 \\ 0 & 1 \end{pmatrix}$, we have $\xi_D(u,s;1,0;g_\eta)=\eta_u(s)$. 
Hence \eqref{Ko-Tsu-dual} in this case implies \eqref{eta-dual-3}. It is also shown that
\begin{align}
& \xi_D(u-1,s;y,w-2;g_\eta)+(1-y)\xi_D(u,s;y,w-2;g_\eta)\label{Ko-Tsu-dual-2}\\
& =\xi_D(s-1,u;w,y-2;g_\eta)+(1-w)\xi_D(s,u;w,y-2;g_\eta)\notag 
\end{align}
(see \cite[Theorem 4.3]{Ko-Tsu}). Here we note that 
$\xi_D(u,s;1,-1;g)=\widetilde{\xi}_u(s)$ which is defined by replacing $-k$ with $u$ in the definition of 
$\widetilde{\xi}_{-k}(s)$ (see Definition \ref{Def-xi-tilde}). Hence \eqref{Ko-Tsu-dual-2} with $(y,w)=(1,1)$ implies
\begin{equation}
\widetilde{\xi}_{u-1}(s)=\widetilde{\xi}_{s-1}(u), \label{tildexi-dual}
\end{equation}
which includes \eqref{tildexi-dual0}. 

Furthermore, Yamamoto proved the identity (\cite[\S1]{Yamamoto})
\begin{equation*}
\eta_k(n)=\sum_{0<a_1\leq \cdots \leq a_k=b_n\geq \cdots \geq b_1>0}\frac{1}{a_1\cdots a_kb_1\cdots b_n}\quad (k,n\in \mathbb{Z}_{\geq 1}),
\end{equation*}
which directly reveals the symmetry \eqref{eta-dual-2}. Similar expression for $\xi_k(n)$ is 
\begin{equation*}
\xi_k(n)=\sum_{0<a_1= \cdots = a_k=b_n\geq \cdots \geq b_1>0}\frac{1}{a_1\cdots a_kb_1\cdots b_n}\quad (k,n\in \mathbb{Z}_{\geq 1}),
\end{equation*}
which unfortunately is not symmetric.  We do not know if any duality property holds for $\xi_k(s)$. 

In addition, recall that we mention at the end of \S3 in \cite{KT} the identity
\begin{align}
&\eta_k(m) =\binom{m+k}{k}\zeta(m+k) \label{eta-val-KT}\\
&  -\!\!\!\!\sum_{2\le r\le k+1\atop 
j_1+\cdots+j_r=m+k-r-1}\!\!\!\binom{j_1+\cdots+j_{r-1}}{k-r+1}\cdot \zeta(j_1+1,\cdots,j_{r-1}+1,j_r+2),\notag
\end{align}
without proof. Recently Shingu proved
\begin{align}
\eta_k(m) &=\sum_{k_1+\cdots+k_r=k+m \atop 1\leq r \leq k,\ k_r\geq 2}\sum_{i=1}^{k_r-1}\binom{k+m-r-i}{m-i}\zeta(k_1,\ldots,k_r) \label{eta-val-Shingu}
\end{align}
in his master's thesis \cite{Shingu} by using Yamamoto's multiple integrals introduced in \cite{Yamamoto-B}. 
It is easy to derive \eqref{eta-val-KT} from \eqref{eta-val-Shingu}.

The referee pointed out that \eqref{eta-val-Shingu} should be equivalent to \eqref{3-8-2}
via the standard relation 
\begin{equation}\label{ord-star} 
\zeta^\star({\bf k})=\sum_{{\bf k'}\preceq {\bf k}}\zeta({\bf k'}).
\end{equation}
We have checked that \eqref{3-8-2} actually implied \eqref{eta-val-Shingu}, by computing
how many times each  $\zeta(k_1,\ldots,k_r)$ appeared when we wrote each 
$\zeta^\star(j_1,\ldots,j_{k-1},j_k)$ in \eqref{3-8-2} as a sum of ordinary multiple zeta
values using \eqref{ord-star}. Our computation is not too complicated but a little lengthy
using generating series, and we omit the details here. (We have not checked the opposite
implication, but it should be done in a similar vein.)\\

At the end of this section, we consider an application of the duality relation $\eta(k;n)=\eta(n;k)$ in \eqref{eta-dual-2}. 
By combining Proposition \ref{etaxi} and Theorem \ref{xibyzeta}, we obtain, for $k,n\in \mathbb{Z}_{\geq 1}$,  
\begin{align*}
\eta(k;n)&= \sum_{(k)\preceq{\bf k'}}\xi({\bf k'};n)\\
& =\sum_{(k)\preceq{\bf k'}}\ \sum_{{\bf k''},\,j\ge0}c_{\bf k'}({\bf k''};j)\binom{n+j-1}{j}\zeta({\bf k''};n+j),
\end{align*}
where the sum is over indices ${\bf k''}$ and integers $j\ge0$ satisfying 
$\vert{\bf k''}\vert+j\le \vert{\bf k'}\vert$, and $c_{\bf k'}({\bf k''};j)$ is a $\mathbb{Q}$-linear combination
of multiple zeta values of weight $\vert{\bf k'}\vert-\vert{\bf k''}\vert-j$ determined by \eqref{xi-mzv}.  

We see that Proposition \ref{etaxi} and Theorem \ref{xibyzeta} were given by 
the connection formulas of Euler type and Landen type, respectively. From \eqref{eta-dual-2}, we obtain the following.

\begin{theorem}\label{Th-6-1}\ 
With the above notation, for $k,n\in \mathbb{Z}_{\geq 1}$,  
\begin{align}
& \sum_{(k)\preceq{\bf k'}}\ \sum_{{\bf k''},\,j\ge0}c_{\bf k'}({\bf k''};j)\binom{n+j-1}{j}\zeta({\bf k''};n+j) \label{eq-6-1}\\
& =\sum_{(n)\preceq{\bf n'}}\ \sum_{{\bf n''},\,j\ge0}c_{\bf n'}({\bf n''};j)\binom{k+j-1}{j}\zeta({\bf n''};k+j).\notag
\end{align}
\end{theorem}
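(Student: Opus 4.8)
The plan is to observe that \eqref{eq-6-1} is simply Yamamoto's duality \eqref{eta-dual-2}, $\eta(k;n)=\eta(n;k)$, written out explicitly after expanding each side through the two connection formulas. Indeed, the left-hand side of \eqref{eq-6-1} is exactly the expression for $\eta(k;n)$ derived just above the theorem: applying Proposition \ref{etaxi} to the depth-one index $(k)$, where the sign $(-1)^{r-1}$ with $r=1$ is trivial, gives
\[ \eta(k;n)=\sum_{(k)\preceq{\bf k'}}\xi({\bf k'};n), \]
the sum running over all compositions ${\bf k'}$ of $k$; substituting the multiple-zeta-function expansion of Theorem \ref{xibyzeta} for each $\xi({\bf k'};n)$ then yields precisely the left-hand side of \eqref{eq-6-1}.

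First I would carry out the identical computation with the roles of $k$ and $n$ interchanged. This produces
\[ \eta(n;k)=\sum_{(n)\preceq{\bf n'}}\ \sum_{{\bf n''},\,j\ge0}c_{\bf n'}({\bf n''};j)\binom{k+j-1}{j}\zeta({\bf n''};k+j), \]
which is exactly the right-hand side of \eqref{eq-6-1}. The two expansions are legitimate termwise because both Proposition \ref{etaxi} and Theorem \ref{xibyzeta} are exact identities among entire functions of $s$, here specialized to the integer arguments $s=n$ and $s=k$, and all the sums involved are finite.

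The final step is to invoke \eqref{eta-dual-2}, valid for all $k,n\in\mathbb{Z}_{\geq1}$, to equate the two displayed expressions. Since the sole analytic input---the symmetry of $\eta(k;n)$ in $k$ and $n$---is furnished entirely by Yamamoto's theorem, there is no genuine obstacle here: the argument is a direct rewriting, and the only point requiring (routine) care is the bookkeeping of the nested sums over the compositions ${\bf k'}$ (resp.\ ${\bf n'}$) and the associated indices ${\bf k''}$ (resp.\ ${\bf n''}$) and integers $j$.
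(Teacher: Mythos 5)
Your proposal is correct and follows exactly the paper's own route: the authors likewise expand $\eta(k;n)$ via Proposition \ref{etaxi} (with the sign $(-1)^{r-1}$ trivial in depth one) and then via Theorem \ref{xibyzeta}, repeat with $k$ and $n$ interchanged, and conclude by Yamamoto's duality \eqref{eta-dual-2}. Nothing is missing.
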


\begin{example}\label{Ex-6-4}
For example, set $(k,n)=(3,2)$ in \eqref{eq-6-1}. Then, by \cite[Example 3.8]{KT}, we have
\begin{align*}
& \zeta(1,2,2)+\zeta(2,1,2)+2\zeta(1,1,3)-\zeta(2)\zeta(1,2)+\zeta(3,2)-3\zeta(1,4)\\
& +2\zeta(2)\zeta(3)+4\zeta(5)=6\zeta(5)-3\zeta(1,4)-\zeta(2,3)+\zeta(2)\zeta(3).
\end{align*}
This can of course be checked by known identities, for example, double shuffle relations.
We do not pursue here connections between identities of MZVs obtained by $\eta(k;n)=\eta(n;k)$ 
as above and known sets of identities. Are there some interesting aspects?
\end{example}

\if0
\section{Several topics from recent developments}\label{sec-5}

In this section, we introduce several topics on $\xi$ and $\eta$ appeared recently.

First, the duality properties \eqref{1-4} and \eqref{1-5} are extended to involve poly-Bernoulli polynomials defined by 
\begin{align}
&e^{-xt}\frac{{\rm Li}_{k}(1-e^{-t})}{1-e^{-t}}=\sum_{n=0}^\infty B_n^{(k)}(x)\frac{t^n}{n!}  \label{e-1-6}
\end{align}
(see Coppo-Candelpergher \cite{CC2010}). 

\begin{theorem}[\cite{KST}\ Corollary 2.2]\label{Th-KST}\ \ For $ l , m, n\in \mathbb{Z}_{\geq 0}$, it holds  
\begin{equation}
\sum_{j=0}^{n} {n \brack j} B_{m}^{(-l-j)}(n) = \sum_{j=0}^{n} {n \brack j} B_{l}^{(-m-j)}(n), \label{eq-5-1}
\end{equation}
where 
${n \brack j}$ $(n,j\in \mathbb{Z}_{\geq 0})$ is the Stirling number of the first kind. 
\end{theorem}

This was first proved by generalizing the proof of Theorem~\ref{Th-4-6}, and then later proved by considering 
the following two types of generating functions for the quantity
\begin{equation*}
\mathscr{B}_{m}^{(-l)}(n):=\sum_{j=0}^{n} {n \brack j} B_{m}^{(-l-j)}(n). 
\end{equation*}

\begin{theorem}[\cite{KST}\ Theorem 2.1]\label{Th-KST} For $n\in \mathbb{Z}_{\geq 0}$, we have
\begin{align}
& \sum_{l=0}^{\infty}\sum_{m=0}^{\infty}\mathscr{B}_{m}^{(-l)}(n)\frac{x^{l}}{l!}\frac{y^{m}}{m!} 
=  \frac{n!\,e^{x+y}}{(e^{x}+e^{y}-e^{x+y})^{n+1}} \label{e-5-2}\\
\intertext{and}
& \sum_{l=0}^{\infty}\sum_{m=0}^{\infty}\mathscr{B}_{m}^{(-l)}(n)x^{l}y^{m} 
= \sum_{j=0}^{\infty} j!\ (j+n)!\ Q_{j}(x)Q_{j}(y), \label{e-5-3}
\end{align}
where 
$$Q_{j}(X) = \frac{X^{j}}{(1-X)(1-2X) \cdots (1-(j+1)X)}\quad (j\in \mathbb{Z}_{\geq 0}).$$
\end{theorem}

Next we recall another problem given in \cite{AK1999}:

\begin{problem}[\cite{AK1999}\ \S8,\ Problem (iv)]\label{Problem-2}
Determine the principal part of the arbitrary multi-zeta functions 
$\zeta(k_1,\ldots,k_r;s)$ at the pole $s = 1$ (or more generally at each possible pole). \end{problem}

There it was suggested that the solution would lead, together with Theorem~\ref{AK-Th8} (recalled in \S\ref{sec-3}), 
to certain non-trivial relations, perhaps including the sum formula, among the multiple
zeta values.  Recall the sum formula for MZVs is 
$$\sum_{k_1+\cdots+k_r=k \atop 
k_1,\ldots,k_{r-1}\geq 1,\ k_r\geq 2} \zeta(k_1,\ldots,k_r)=\zeta(k),$$
which was first formulated by Moen and Hoffman \cite{Ho}, and then proved independently by Zagier \cite{Z} and Granville \cite{Gr}.

Concerning this, the second named author gave the following consideration based on an observation 
of an analytic behavior of $\xi(\underbrace{1,\ldots,1}_{r-1},2;s)$ around $s=1$ by using \cite[Theorems $8$ and $9$]{AK1999}. 
By \cite[Corollaty 11]{AK1999} with $ k=2$, we obtain
\begin{align}
& \sum_{\nu=2}^{m+1}\binom{\nu-1+r}{r}\zeta(m-\nu+2,\nu+r)+\zeta(m+1,r+1) \label{e-5-4}\\
& +\sum_{\nu=2}^{r+1}\binom{\nu-1+m}{m}\zeta(r-\nu+2,\nu+m)+\zeta(r+1,m+1)\notag\\
& \qquad =\zeta(r+1)\zeta(m+1) \notag
\end{align}
(we used the duality $\zeta(\underbrace{1,\ldots,1}_{q-1},p)=\zeta(\underbrace{1,\ldots,1}_{p-1},q)$ $(p,q\in \mathbb{Z}_{\geq 2})$). 
Furthermore, by the harmonic relation 
$$\zeta(s_1)\zeta(s_2)=\zeta(s_1,s_2)+\zeta(s_2,s_1)+\zeta(s_1+s_2), $$
we can rewrite \eqref{e-5-4} as 
\begin{align}
& \sum_{\nu=2}^{m+1}\binom{\nu-1+r}{r}\zeta(\nu+r,m-\nu+2) \label{e-5-5}\\
& \quad +\sum_{\nu=2}^{r+1}\binom{\nu-1+m}{m}\zeta(\nu+m,r-\nu+2)\notag\\
& \qquad =\zeta(m+r+2)\quad (m,r \in \mathbb{Z}_{\geq 1}). \notag
\end{align}
Here it should be noted that \eqref{e-5-5} holds true for $m=0$ by a suitable consideration on $\xi(\underbrace{1,\ldots,1}_{r-1},2;s)$,
and this gives the sum formula for double zeta values:
\begin{align}
& \sum_{\nu=2}^{r+1}\zeta(\nu,r-\nu+2)=\zeta(r+2). \label{S-F}
\end{align}
The similar consideration for the case of depth $\ge3$ does not lead to the sum formula.
Instead, however, a more interesting consequence can be derived:

\begin{theorem}[\cite{KS}\ Theorem 1.1]\label{Th-KS}
For $r,k\in \mathbb{Z}_{\geq 1}$, 
\begin{equation}
\zeta(\underbrace{1,\ldots,1}_{r-1},k+1)=\sum_{j=1}^{\min (r,k)}(-1)^{j-1}\sum_{|{\bf a}|=k,\,|{\bf b}|=r \atop d({\bf a})=d({\bf b})=j}
\zeta({\bf a}+{\bf b}), \label{e-5-6}
\end{equation}
where ${\bf a}$ and ${\bf b}$ are indices of the same depth and ${\bf a}+{\bf b}$ is the index obtained by the component-wise addition.
\end{theorem}
This formula is amusing because it expresses the so-called `hight one' multiple zeta values by those without 1 in the indices in a symmetric form,
thereby making the duality $\zeta(\underbrace{1,\ldots,1}_{r-1},k+1)=\zeta(\underbrace{1,\ldots,1}_{k-1},r+1)$ visible.
When $r=2$, \eqref{e-5-6} gives the sum formula \eqref{S-F}. One may regard \eqref{e-5-6} as a realization of our previous 
implication.

\fi 

\section{Zeta functions interpolating multiple zeta values of level $2$}\label{sec-6}

In this section, we define a certain level $2$-version of the function $\xi(k_1,\ldots,k_r;s)$ 
which interpolates multiple zeta values of level $2$ at positive integers.  Here, we mean by
MZVs of level 2 the quantities essentially equivalent to those often referred to as the Euler sums.
But we only look at a special subclass of them.  Specifically, we look at the quantity
\[ \sum_{0<m_1<\cdots <m_r\atop m_i\equiv i\bmod 2}
\frac1{m_1^{k_1}\cdots m_{r}^{k_{r}}}, \]
i.e., the sum is restricted to $m_1,m_2,m_3,\ldots$ with odd, even, odd, \ldots in alternating manner.
 These numbers in depth 2 were considered in \cite{KanekoTasaka} in connection to modular forms
of level 2, establishing a generalization of the work by Gangle-Kaneko-Zagier \cite{GKZ}. 

In \cite[Section 4]{Sasaki2012}, Sasaki considered the polylogarithm of level $2$ defined by
$${\rm Ath}_k(z)=\sum_{n=0}^\infty \frac{z^{2n+1}}{(2n+1)^k}=\Li_k(z)-\frac{1}{2^k}\Li_k(z^2)$$
for $k\in \mathbb{Z}$. When $k=1$, this becomes the well-known 
$${\rm Ath}_1(z)=\tanh^{-1}z=\sum_{n=0}^\infty \frac{z^{2n+1}}{2n+1}=\Li_1(z)-\frac{1}{2}\Li_1(z^2).$$

We generalize this to a multiple version. For $k_1,\ldots,k_r\in \mathbb{Z}$, define
\begin{align}
& {\rm Ath}(k_1,\ldots,k_r;z)=\sum_{0<m_1<\cdots <m_r\atop m_i\equiv i\bmod 2}
\frac{z^{m_r}}{m_1^{k_1}\cdots m_{r}^{k_{r}}}\\ \label{def-MP-2}
& \qquad =\sum_{n_1,\ldots,n_r=0}^\infty \frac{z^{\sum_{\nu=1}^{r}(2n_\nu+1)}}{\prod_{j=1}^{r}\left(\sum_{\nu=1}^{j}(2n_\nu+1)\right)^{k_j}}. \notag
\end{align}
Note that since ${\rm Ath}(1;z)=\tanh^{-1}z$, we have
\begin{equation}
{\rm Ath}(1;\tanh t)=t. \label{ee-6-0}
\end{equation}
Similar to \cite[Lemma 1]{AK1999}, we can easily obtain the following.

\begin{lemma}\label{Lem-6-1} 
\begin{enumerate}[{\rm (i)}]
\item For $k_1,\ldots,k_r\in \mathbb{Z}_{\geq 1}$, 
\begin{align*}
&\frac{d}{dz}{\rm Ath}(k_1,\ldots,k_r;z)\\
& =
\begin{cases}
\frac{1}{z}{\rm Ath}(k_1,\ldots,k_{r-1},k_r-1;z) & (k_r\geq 2),\\
\frac{1}{1-z^2}{\rm Ath}(k_1,\ldots,k_{r-1};z) & (k_r=1).
\end{cases}
\end{align*}
\item $\displaystyle{{\rm Ath}(\underbrace{1,\ldots,1}_{r};z)=\frac{1}{r!}({\rm Ath}(1;z))^r}.$
\end{enumerate}
\end{lemma}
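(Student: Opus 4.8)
The plan is to prove part (i) directly by differentiating the defining series term by term, and then to deduce part (ii) by induction on $r$ using the $k_r=1$ case of (i). Throughout I work on $0<z<1$ (or $|z|<1$), where the series ${\rm Ath}(k_1,\ldots,k_r;z)$ converges absolutely and locally uniformly, so differentiation under the summation sign is justified; I also adopt the convention that the empty-index function equals $1$, which is what makes the $k_r=1$ formula meaningful when $r=1$.

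For part (i), I would start from
$$
{\rm Ath}(k_1,\ldots,k_r;z)=\sum_{0<m_1<\cdots<m_r,\ m_i\equiv i\bmod 2}\frac{z^{m_r}}{m_1^{k_1}\cdots m_r^{k_r}}
$$
and differentiate using $\frac{d}{dz}z^{m_r}=m_r z^{m_r-1}$. When $k_r\geq 2$ the factor $m_r$ simply lowers the exponent $k_r$ by one, the prefactor $z^{-1}$ comes out, and since the summation range and all parity constraints are unchanged we obtain $\frac1z{\rm Ath}(k_1,\ldots,k_{r-1},k_r-1;z)$ at once. The substantive case is $k_r=1$: here the $m_r$ in the numerator cancels $m_r^{k_r}=m_r$, leaving
$$
\frac{d}{dz}{\rm Ath}(k_1,\ldots,k_{r-1},1;z)=\sum_{0<m_1<\cdots<m_{r-1}}\frac{1}{m_1^{k_1}\cdots m_{r-1}^{k_{r-1}}}\sum_{\substack{m_r>m_{r-1}\\ m_r\equiv r\bmod 2}}z^{m_r-1}.
$$
The key step is the inner sum. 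Because $m_{r-1}\equiv r-1$ and $m_r\equiv r\pmod 2$, the admissible $m_r$ are exactly $m_{r-1}+1,\,m_{r-1}+3,\,m_{r-1}+5,\dots$, i.e. $m_r=m_{r-1}+2j+1$ for $j\geq 0$. Hence
$$
\sum_{\substack{m_r>m_{r-1}\\ m_r\equiv r\bmod 2}}z^{m_r-1}=\sum_{j\geq 0}z^{m_{r-1}+2j}=\frac{z^{m_{r-1}}}{1-z^2},
$$
and factoring out $(1-z^2)^{-1}$ leaves precisely ${\rm Ath}(k_1,\ldots,k_{r-1};z)$. This is exactly where the level-$2$ structure enters: the parity gap of $2$ produces a geometric series in $z^2$, yielding the factor $(1-z^2)^{-1}$ in place of the $(1-z)^{-1}$ of the classical (level $1$) computation in \cite[Lemma 1]{AK1999}.

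For part (ii) I would argue by induction on $r$. The base case $r=1$ is trivial, and the $k_r=1$ case of (i) with $r=1$ gives $\frac{d}{dz}{\rm Ath}(1;z)=(1-z^2)^{-1}$. For the inductive step, applying (i) with $k_r=1$ together with the inductive hypothesis yields
$$
\frac{d}{dz}{\rm Ath}(\underbrace{1,\ldots,1}_r;z)=\frac{1}{1-z^2}\cdot\frac{1}{(r-1)!}\bigl({\rm Ath}(1;z)\bigr)^{r-1},
$$
while differentiating the proposed right-hand side $\frac{1}{r!}({\rm Ath}(1;z))^r$ by the chain rule gives the same expression, using $\frac{d}{dz}{\rm Ath}(1;z)=(1-z^2)^{-1}$. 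Since both functions vanish at $z=0$ (every term carries a positive power of $z$), they coincide, completing the induction.

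The computations are elementary; the only genuine content is the parity-constrained geometric summation in the $k_r=1$ case, which I expect to be the one point deserving care, both in identifying the step size $2$ in $m_r$ and in its role as the mechanism producing the level-$2$ factor. The remaining subtleties are routine: justifying termwise differentiation on $|z|<1$ and keeping track of the empty-index convention when $r=1$.
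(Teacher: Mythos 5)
Your proof is correct, and it is essentially the argument the paper has in mind: the paper omits the proof entirely, saying only that the lemma follows ``similar to [AK1999, Lemma 1]'', and that standard argument is exactly your termwise differentiation, with the parity-constrained geometric sum $\sum_{j\ge0}z^{m_{r-1}+2j}=z^{m_{r-1}}/(1-z^2)$ supplying the level-$2$ factor and the induction via the $k_r=1$ case giving part (ii). You correctly identify the one point of substance (the step size $2$ in $m_r$) and handle the empty-index convention, so nothing is missing.
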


We define a kind of multiple zeta function of level $2$ as follows.

\begin{defn}\label{Def-6-1}\ For $k_1,\ldots,k_{r-1}\in \mathbb{Z}_{\geq 1}$ and $\Re s>1$, 
let
\begin{align}
&\T(k_1,\ldots,k_{r-1},s)=\sum_{0<m_1<\cdots <m_r\atop m_i\equiv i\bmod 2}
\frac1{m_1^{k_1}\cdots m_{r-1}^{k_{r-1}}m_r^s} \label{ee-5-2} \\ 
&\qquad=\sum_{n_1,\ldots,n_r\geq 0}\prod_{j=1}^{r-1}\left(\sum_{\nu=1}^{j}(2n_\nu+1)\right)^{-k_j}
\times \left(\sum_{\nu=1}^{r}(2n_\nu+1)\right)^{-s}.\notag
\end{align}
Furthermore, as its normalized version, let
\begin{align}
\TT(k_1,\ldots,k_{r-1},s)&=2^{r}\T(k_1,\ldots,k_{r-1},s).\label{ee-5-2-2}
\end{align}
\end{defn}

When $k_r>1$, we see that 
\[ {\rm Ath}(k_1,\ldots,k_r;1) = \T(k_1\ldots,k_r). \]
Corresponding to these functions, we define a level $2$-version of $\xi(k_1,\ldots,k_r;s)$.

\begin{defn}\label{Def-6-2}
For $k_1,\ldots,k_r\in \mathbb{Z}_{\geq 1}$, let
\begin{align}
& \psi(k_1,\ldots,k_r;s)\label{ee-6-1}\\
& =\frac{2^r}{\Gamma(s)}\int_0^\infty t^{s-1}\frac{{\rm Ath}(k_1,\ldots,k_{r};\tanh (t/2))}{\sinh(t)}\,dt \quad (\Re s>0). \notag
\end{align}
\end{defn}

\begin{remark}\label{Rem-6-3}
In \cite[Section 4]{Sasaki2012}, Sasaki essentially considered \eqref{ee-5-2}, and also $\psi(k_1;s)$ . 
In fact, Sasaki considered a little more general function $\psi_k(s,a)$ $(0<a<1)$, and our $\psi(k;s)$ coincides with 
his $2^{s+2}\psi_k(s,1/2)$. 
\end{remark}

Similar to \cite[Theorem 6]{AK1999}, we can see that $\psi(k_1,\ldots,k_r;s)$ can be continued to $\mathbb{C}$ as an entire function. 
Further we can prove the following theorem which is exactly a level $2$-analogue of \cite[Theorem 8]{AK1999}. 
Note that this theorem for the case $r=1$ was essentially proved by Sasaki (see \cite[Theorem 7]{Sasaki2012}). 

\begin{theorem}\label{Th-6-4} For $r, k\in \mathbb{Z}_{\geq 1}$, 
\begin{align*}
& \psi(\underbrace{1,\ldots,1}_{r-1},k;s)   \\
&=(-1)^{k-1}\!\!\!\!\!\!\sum_{a_1,\ldots,a_k\geq 0 \atop a_1+\cdots+a_k=r}
\binom{s+a_k-1}{a_k}\cdot \TT(a_1+1,\ldots,a_{k-1}+1,a_k+s)\notag\\
&\quad +\sum_{j=0}^{k-2}(-1)^{j}\,\TT(\underbrace{1,\ldots,1}_{r-1},k-j)\cdot 
\TT(\underbrace{1,\ldots,1}_{j},s).\notag
\end{align*}
\end{theorem}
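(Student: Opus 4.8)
The plan is to imitate the proof of the level-$1$ result, Theorem~\ref{AK-Th8}, tracking the modifications forced by the $\sinh$ and $\tanh$ in Definition~\ref{Def-6-2}. The first step is to recast $\psi$ as a Mellin transform over an ordered simplex. By Lemma~\ref{Lem-6-1}(i), the function ${\rm Ath}(\underbrace{1,\ldots,1}_{r-1},k;z)$ is the iterated integral with $k-1$ outer kernels $dz/z$ and $r$ inner kernels $dz/(1-z^2)$; substituting $z=\tanh(\tau/2)$ turns these into $d\tau/\sinh\tau$ and $\tfrac12\,d\tau$ respectively, since $dz/(1-z^2)=\tfrac12\,d\tau$ and $dz/z=d\tau/\sinh\tau$. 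Collapsing the $r$ innermost kernels $\tfrac12\,d\tau$ produces a factor $\sigma_1^{\,r}/(2^r r!)$, and together with the outer weight $t^{s-1}/\sinh t$ from \eqref{ee-6-1} (the $2^r$ cancels) this gives
\begin{equation*}
\psi(\underbrace{1,\ldots,1}_{r-1},k;s)=\frac{1}{\Gamma(s)\,r!}\int_{0<\sigma_1<\cdots<\sigma_k}\frac{\sigma_1^{\,r}\,\sigma_k^{\,s-1}}{\sinh\sigma_1\cdots\sinh\sigma_k}\,d\sigma_1\cdots d\sigma_k,
\end{equation*}
the exact level-$2$ analogue of the representation underlying Theorem~\ref{AK-Th8}. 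The case $k=1$ is the base case: the simplex degenerates to one variable and, using $\int_0^\infty \sigma^{s+r-1}/\sinh\sigma\,d\sigma=\Gamma(s+r)\TT(s+r)$, one gets $\psi(\underbrace{1,\ldots,1}_r;s)=\binom{s+r-1}{r}\TT(s+r)$, which is precisely the asserted identity when $k=1$.

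Next I would expand every factor $1/\sinh\sigma_i=2\sum_{l_i\ge0}e^{-(2l_i+1)\sigma_i}$ and integrate over the simplex from the innermost variable outward. Each single integration is elementary: writing $\int_0^{x}\sigma^a e^{-n\sigma}\,d\sigma$ through the incomplete $\Gamma$-function splits the contribution of $\sigma_i$ into a \emph{decoupled} part (upper limit sent to $\infty$, which detaches a complete level-$2$ zeta factor) and a \emph{merged} part carrying $-e^{-n\sigma_{i+1}}$, which folds the frequency $n=2l_i+1$ into the exponential of the next variable. The decisive point is that all frequencies are odd, so the running partial sums $(2l_1+1)+\cdots+(2l_j+1)$ have residue $j\bmod 2$; this is exactly the congruence $m_j\equiv j\bmod 2$ in \eqref{ee-5-2}, so the merged contributions reassemble into the level-$2$ values $\T$ and the decoupled ones into products of such values. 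This parity mechanism is the one genuinely new feature compared with \cite{AK1999}, where the frequencies are arbitrary positive integers and no congruence appears.

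Finally I would collect the contributions. The fully merged term (all $k-1$ merges performed) assembles into a single depth-$k$ value $\TT(a_1+1,\ldots,a_{k-1}+1,a_k+s)$: the two polynomial weights $\sigma_1^{\,r}$ and $\sigma_k^{\,s-1}$ force the total extra degree $r$ to split as a composition $a_1+\cdots+a_k=r$, the last integration $\int_0^\infty\sigma^{s+a_k-1}e^{-M\sigma}\,d\sigma$ produces $\binom{s+a_k-1}{a_k}$, and the $k-1$ minus signs from the merges give the global factor $(-1)^{k-1}$; this is the first sum. The partially decoupled terms give the products $\TT(\cdots)\TT(\cdots)$ of the second sum, with signs $(-1)^j$. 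I expect the main obstacle to be exactly this last bookkeeping for general $k$: controlling the composition/binomial structure, pinning down every sign, and—most delicately—matching the decoupled products to the stated indices $(\underbrace{1,\ldots,1}_{r-1},k-j)$ and $(\underbrace{1,\ldots,1}_j,s)$. Already at $k=2$ a direct computation of the decoupled part yields $\TT(r+1)\TT(s)$, so the identification with the stated $\TT(\underbrace{1,\ldots,1}_{r-1},2)\TT(s)$ uses a level-$2$ duality $\TT(\underbrace{1,\ldots,1}_{r-1},2)=\TT(r+1)$ (which one checks holds numerically and should follow from the iterated-integral form). Managing the analogous rearrangements for all $k$, exactly in parallel with the proof of Theorem~\ref{AK-Th8}, is where essentially all the work lies.
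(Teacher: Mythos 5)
Your reduction of $\psi(\underbrace{1,\ldots,1}_{r-1},k;s)$ to the simplex integral $\frac{1}{\Gamma(s)\,r!}\int_{0<\sigma_1<\cdots<\sigma_k}\sigma_1^{r}\sigma_k^{s-1}\prod_i(\sinh\sigma_i)^{-1}\,d\sigma$ is correct and coincides with the first half of the paper's argument: expanding $\sigma_1^r=(x_1+\cdots+x_k)^r$ multinomially and invoking Lemma~\ref{Lem-6-5} yields the first sum, with the composition $a_1+\cdots+a_k=r$ and the binomial coefficient exactly as you predict, and your $k=1$ base case is right. The gap is in the second half, where the products $\TT(\underbrace{1,\ldots,1}_{r-1},k-j)\,\TT(\underbrace{1,\ldots,1}_{j},s)$ must be produced. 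Your mechanism --- expand each $1/\sinh\sigma_i$ into odd exponentials and split each $\int_0^{\sigma_{i+1}}$ into a ``decoupled'' $\int_0^\infty$ and a ``merged'' $-\int_{\sigma_{i+1}}^\infty$ --- breaks down already at $k=3$: after decoupling $\sigma_1$, the leftover integral $\int_{0<\sigma_2<\cdots<\sigma_k}\sigma_k^{s-1}\prod_{i\ge2}(\sinh\sigma_i)^{-1}\,d\sigma$ diverges, because the new innermost variable $\sigma_2$ carries no positive power to control $1/\sinh\sigma_2\sim1/\sigma_2$ at the origin; only the recombination with the $j=0$ term of the merged branch is finite. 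Your check at $k=2$ cannot detect this, since there the leftover is the convergent $\int_0^\infty\sigma_2^{s-1}/\sinh\sigma_2\,d\sigma_2$. So the ``decoupled plus merged'' decomposition does not make sense term by term for $k\ge3$, and the bookkeeping you defer is not merely lengthy: it would have to be reorganized from scratch to avoid divergent pieces.

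The paper's device for precisely this step is an integration by parts on the ${\rm Ath}$ side rather than a series splitting. By Lemma~\ref{Lem-6-1}(i) one has the relation \eqref{deriv}, so integrating the innermost surviving variable over $(0,\infty)$ detaches the boundary value ${\rm Ath}(\underbrace{1,\ldots,1}_{r-1},\nu+1;1)=\T(\underbrace{1,\ldots,1}_{r-1},\nu+1)$ --- convergent because the last entry is $\ge 2$ --- times a convergent leftover equal to $\TT(\underbrace{1,\ldots,1}_{k-\nu-1},s)$, minus an integral $I_{\nu+1}^{(r,k)}(s)$ of the same shape; iterating down to $I_k^{(r,k)}(s)=\psi(\underbrace{1,\ldots,1}_{r-1},k;s)$ produces the second sum with the signs $(-1)^j$ and, upon comparison with the multinomial evaluation, the global $(-1)^{k-1}$ on the first sum. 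This also disposes of your second difficulty: the products emerge directly with the indices $(\underbrace{1,\ldots,1}_{r-1},k-j)$, so no appeal to the level-$2$ height-one duality \eqref{ht1dual} is needed. That is just as well, because in the paper that duality is deduced only afterwards, from Theorem~\ref{T-5-1} together with the evaluation $\psi(k_1,\ldots,k_r;1)=\TT(k_1,\ldots,k_{r-1},k_r+1)$, and you give no independent proof of it.
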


In order to prove this theorem, we prepare the following lemma which is a level $2$-version of \cite[Theorem 3 (i)]{AK1999}. 
The proof is completely similar and is omitted.

\begin{lemma}\label{Lem-6-5}\ For $l_1,\ldots,l_{m-1}\in \mathbb{Z}_{\geq 1}$ and $\Re s>1$,  
\begin{align*}
&\TT(l_1,\ldots,l_{m-1},s) \\
&= \frac{1}{\Gamma(l_1)\cdots\Gamma(l_{m-1})\Gamma(s)}\int_{0}^\infty \cdots \int_{0}^\infty x_1^{l_1-1}\cdots x_{m-1}^{l_{m-1}-1}x_m^{s-1}\\
& \qquad \times \prod_{j=1}^{m}\frac{1}{\sinh(x_j+\cdots+x_m)}dx_1\cdots dx_m.
\end{align*}
\end{lemma}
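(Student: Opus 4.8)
The plan is to establish the integral representation for $\TT(l_1,\ldots,l_{m-1},s)$ by expanding the hyperbolic-secant product $\prod_{j=1}^m \sinh(x_j+\cdots+x_m)^{-1}$ into a geometric series in exponentials, integrating term by term against the monomials $x_1^{l_1-1}\cdots x_{m-1}^{l_{m-1}-1}x_m^{s-1}$, and recognizing the resulting multiple sum as the series in Definition \ref{Def-6-1}. Since the statement says the proof is ``completely similar'' to \cite[Theorem 3 (i)]{AK1999}, I would closely follow that template, with the one essential difference that the kernel $(e^u-1)^{-1}$ of the level-$1$ theory is replaced by $\sinh(u)^{-1}$. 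First I would record the single-variable identity
\begin{equation*}
\frac{1}{\sinh u}=\frac{2}{e^u-e^{-u}}=2\sum_{n=0}^\infty e^{-(2n+1)u}\qquad(u>0),
\end{equation*}
which is the source of the odd integers $2n+1$ and of the factors of $2$ that assemble into the normalization $2^r$ hidden inside $\TT$.

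The key computational step is then to set $u_j=x_j+x_{j+1}+\cdots+x_m$ for $j=1,\ldots,m$ and apply the above expansion to each of the $m$ factors, introducing independent summation indices $n_1,\ldots,n_m\ge 0$. After expanding, the exponential in the integrand becomes $\exp\!\bigl(-\sum_{j=1}^m (2n_j+1)u_j\bigr)$, and collecting the coefficient of each original variable $x_i$ shows that $x_i$ appears with weight $\sum_{j=1}^i (2n_j+1)$ (since $x_i$ sits inside $u_1,\ldots,u_i$). Thus the exponent of $x_i$ is the partial sum $\sum_{\nu=1}^i(2n_\nu+1)$, exactly the quantity appearing in the denominators of \eqref{ee-5-2}. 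I would then carry out the $m$ elementary Gamma integrals
\begin{equation*}
\int_0^\infty x_i^{a_i-1}e^{-c_i x_i}\,dx_i=\frac{\Gamma(a_i)}{c_i^{a_i}},
\end{equation*}
with $a_i=l_i$ for $i<m$ (and $a_m=s$) and $c_i=\sum_{\nu=1}^i(2n_\nu+1)$; the factors $\Gamma(l_1)\cdots\Gamma(l_{m-1})\Gamma(s)$ cancel the prefactor, and the $2^m$ coming from the $m$ expansions produces the factor $2^r$ built into $\TT$ via \eqref{ee-5-2-2}. Reindexing the sum over $n_1,\ldots,n_m$ reproduces precisely the series in \eqref{ee-5-2}, proving the claim.

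The main obstacle is not the formal manipulation but its \emph{justification}: interchanging the $m$-fold integral with the $m$-fold infinite sum, and guaranteeing absolute convergence so that the term-by-term integration is legitimate. For $\Re s>1$ the outermost geometric series converges and the integrand is dominated by a product of absolutely convergent pieces, so Tonelli's theorem (applied to the nonnegative moduli) licenses every interchange; I would verify the dominating bound $\sinh(u)^{-1}\le C e^{-u}$ for $u$ bounded away from $0$ together with the integrability of $x_i^{l_i-1}$ near the origin to rule out boundary divergences. Once the convergence bookkeeping is in place the identity is purely combinatorial, which is why the authors are content to omit it; I would likewise present the expansion and the Gamma integrals in full and relegate the convergence estimate to a one-line appeal to the corresponding argument in \cite{AK1999}.
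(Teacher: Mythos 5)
Your proposal is correct and is exactly the argument the paper has in mind: the paper omits the proof as ``completely similar'' to \cite[Theorem 3 (i)]{AK1999}, which is precisely your expansion $1/\sinh u=2\sum_{n\ge0}e^{-(2n+1)u}$ applied to each factor, followed by termwise Gamma integration and an appeal to Tonelli, with the $2^m$ from the $m$ expansions accounting for the normalization in \eqref{ee-5-2-2}. Nothing further is needed.
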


\begin{proof}[Proof of Theorem \ref{Th-6-4}] 
The method of the proof is similar to that in \cite[Theorem 8]{AK1999} (see also \cite[Theorem 7]{Sasaki2012}). 
Given $r,k\ge1$, introduce the following integrals 
\begin{align*}
I_\nu^{(r,k)}(s)& =\frac{2^r}{\Gamma(s)}\! \int_{0}^\infty \!\!\!\cdots\! \int_{0}^\infty 
\frac{{\rm Ath}(\overbrace{1,\ldots,1}^{r-1},\nu;\tanh ((x_\nu+\cdots+x_k)/2))}{\prod_{l=\nu}^{k}
\sinh(x_l+\cdots+x_k)}\\
&\qquad \times x_k^{s-1}\,dx_\nu\cdots dx_k.
\end{align*}
We compute $I_1^{(r,k)}(s)$ in two different ways.  First, since
\[ {\rm Ath}(\underbrace{1,\ldots,1}_{r};\tanh ((x_1+\cdots+x_k)/2))=
\frac1{r!}\left(\frac{x_1+\cdots+x_k}{2}\right)^r  \]
by Lemma~\ref{Lem-6-1}~(ii) and \eqref{ee-6-0}, we have
\begin{align*}
&I_1^{(r,k)}(s)\\
& =\frac{1}{\Gamma(s)\, r!}\int_{0}^\infty \cdots \int_{0}^\infty 
\frac{(x_1+\cdots+x_k)^rx_k^{s-1}}{\prod_{l=1}^{k}\sinh(x_l+\cdots+x_k)}\,dx_1\cdots dx_k\\
& =\frac{1}{\Gamma(s)}\!\!\sum_{a_1+\cdots+a_k=r}
\frac{1}{a_1!\cdots a_k!}\int_{0}^\infty\!\!\! \cdots\! \int_{0}^\infty x_1^{a_1}\cdots x_{k-1}^{a_{k-1}}x_k^{s+a_k-1}\\
& \qquad \times \frac1{\prod_{l=1}^{k}\sinh(x_l+\cdots+x_k)}\,dx_1\cdots dx_k\\
&=\sum_{a_1+\cdots+a_k=r}\frac{\Gamma(s+a_k)}{\Gamma(s)a_k!}
\times \frac1{\Gamma(a_1+1)\cdots\Gamma(a_{k-1}+1)\Gamma(s+a_k)}\\
&\qquad\times \int_{0}^\infty\!\!\! \cdots\! \int_{0}^\infty 
\frac{x_1^{a_1}\cdots x_{k-1}^{a_{k-1}}x_k^{s+a_k-1}}
{\prod_{l=1}^{k}\sinh(x_l+\cdots+x_k)}\,dx_1\cdots dx_k.
\end{align*}
Using Lemma~\ref{Lem-6-5} for the last integral, we obtain
\begin{align}
 \label{I1-1}   I_1^{(r,k)}(s) &=   \sum_{a_1+\cdots+a_k=r}\binom{s+a_k-1}{a_k}\times\\
&\qquad \qquad \TT(a_1+1,\ldots,a_{k-1}+1,s+a_k).\notag
\end{align}

Secondly, by using 
\begin{align} &\frac{\partial}{\partial x_\nu} {\rm Ath}(\underbrace{1,\ldots,1}_{r-1},\nu+1;
\tanh ((x_\nu+\cdots +x_k)/2))\label{deriv} \\
&=\frac{{\rm Ath}(\overbrace{1,\ldots,1}^{r-1},\nu;\tanh ((x_\nu+\cdots +x_k)/2))}
{\sinh(x_\nu+\cdots+x_k)}\notag 
\end{align} (see Lemma~\ref{Lem-6-1}) and Lemma~\ref{Lem-6-5}, we compute
\begin{align*}
&I_\nu^{(r,k)}(s)\\
&=\frac{2^r}{\Gamma(s)}  \int_{0}^\infty\!\!\! \cdots\! \int_{0}^\infty 
\left[  {\rm Ath}(\underbrace{1,\ldots,1}_{r-1},\nu+1;\tanh ((x_\nu+\cdots +x_k)/2))
\right]_{x_\nu=0}^\infty\\
&\qquad \times \frac1{\prod_{l=\nu+1}^k \sinh(x_l+\cdots+x_k)}\, x_k^{s-1}\,dx_{\nu+1}\cdots dx_k\\
&=2^r\T(\underbrace{1,\ldots,1}_{r-1},\nu+1)\cdot
\TT(\underbrace{1,\ldots,1}_{k-\nu-1},s) -I_{\nu+1}^{(r,k)}\\
&=\TT(\underbrace{1,\ldots,1}_{r-1},\nu+1)\cdot
\TT(\underbrace{1,\ldots,1}_{k-\nu-1},s) -I_{\nu+1}^{(r,k)}.
\end{align*}
Therefore, using this relation repeatedly, we obtain 
\begin{align*}
&I_1^{(r,k)}(s)\label{I1-2}\\
&=\sum_{\nu=1}^{k-1}(-1)^{\nu-1}\TT(\underbrace{1,\ldots,1}_{r-1},\nu+1)\cdot
\TT(\underbrace{1,\ldots,1}_{k-\nu-1},s)+(-1)^{k-1}I_{k}^{(r,k)}\notag\\
&=\sum_{j=0}^{k-2}(-1)^{k-j}\TT(\underbrace{1,\ldots,1}_{r-1},k-j)\cdot
\TT(\underbrace{1,\ldots,1}_{j},s)+(-1)^{k-1}I_{k}^{(r,k)}.\notag
\end{align*}
By definition, we have 
\[ I_k^{(r,k)}(s)=\psi(\underbrace{1,\ldots,1}_{r-1},k;s), \]
and thus
\begin{align}
\label{I1-2} I_1^{(r,k)}(s)&=\sum_{j=0}^{k-2}(-1)^{k-j}\TT(\underbrace{1,\ldots,1}_{r-1},k-j)\cdot
\TT(\underbrace{1,\ldots,1}_{j},s)\\
&\qquad +(-1)^{k-1}\psi(\underbrace{1,\ldots,1}_{r-1},k;s).\notag
\end{align}
Comparing \eqref{I1-1} and \eqref{I1-2}, we obtain the assertion.
\end{proof}

Next, we show a level $2$-version of \cite[Theorem 9\ (i)]{AK1999}.

\begin{theorem}\label{T-5-1}\ For $r,k\in \mathbb{Z}_{\geq 1}$ and $m\in \mathbb{Z}_{\geq 0}$,
\begin{align}
 &\psi(\underbrace{1,\ldots,1}_{r-1},k;m+1)\label{ee-5-4}\\
 &=\sum_{a_1,\ldots,a_k\geq 0 \atop a_1+\cdots+a_k=m}\binom{a_k+r}{r}
 \cdot \TT(a_1+1,\ldots,a_{k-1}+1,a_k+r+1). \notag
\end{align}
\end{theorem}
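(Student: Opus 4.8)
The plan is to specialize Theorem \ref{Th-6-4} at the positive integer argument $s=m+1$ and simplify. Setting $s=m+1$ in that formula gives two pieces on the right-hand side: a main sum involving $\TT(a_1+1,\ldots,a_{k-1}+1,a_k+m+1)$ with binomial weights $\binom{m+a_k}{a_k}$, and a secondary finite sum of products $\TT(\underbrace{1,\ldots,1}_{r-1},k-j)\cdot\TT(\underbrace{1,\ldots,1}_{j},m+1)$. I expect the target formula \eqref{ee-5-4} to emerge from a \emph{different} specialization, and the cleanest route is to imitate the proof of \cite[Theorem 9 (i)]{AK1999} directly rather than to algebraically massage Theorem \ref{Th-6-4}. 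So instead I would reconsider the integral $I_1^{(r,k)}(s)$ that already appears in the proof of Theorem \ref{Th-6-4}, but now evaluate $\psi(\underbrace{1,\ldots,1}_{r-1},k;s)$ at $s=m+1$ using the explicit polynomial form of the level-$2$ multiple polylogarithm at such arguments.

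The key observation is that $\psi(\underbrace{1,\ldots,1}_{r-1},k;m+1)$ can be computed by substituting the finite expansion of $\Gamma(m+1)^{-1}t^{m}$ against the kernel in \eqref{ee-6-1}. Concretely, first I would write out
\[
\psi(\underbrace{1,\ldots,1}_{r-1},k;m+1)
=\frac{2^r}{m!}\int_0^\infty t^{m}\,
\frac{{\rm Ath}(\underbrace{1,\ldots,1}_{r-1},k;\tanh(t/2))}{\sinh t}\,dt,
\]
and then expand the series defining ${\rm Ath}(\underbrace{1,\ldots,1}_{r-1},k;\,\cdot\,)$ from \eqref{def-MP-2}. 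The next step is to use the integral representation encoded in Lemma \ref{Lem-6-5} together with Lemma \ref{Lem-6-1}~(ii), which lets me replace the $r-1$ leading indices equal to $1$ by the power $\tfrac{1}{(r-1)!}(\tfrac{x}{2})^{r-1}$ of the innermost variable, after the integration by parts that raises the last index has been carried out $m$ times. Each such integration by parts produces, via \eqref{deriv}, a binomial factor, and the combinatorics of distributing the weight $m$ among the $k$ slots is exactly what yields the constraint $a_1+\cdots+a_k=m$ and the coefficient $\binom{a_k+r}{r}$ in \eqref{ee-5-4}.

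The main obstacle will be bookkeeping the binomial coefficients correctly: I must track how the factor $\tfrac{1}{(r-1)!}(x/2)^{r-1}$ coming from Lemma \ref{Lem-6-1}~(ii) combines with the $a_k$-fold differentiation of the last entry to produce $\binom{a_k+r}{r}$ rather than $\binom{a_k+r-1}{r-1}$ or $\binom{a_k}{r}$. The shift by one in the upper index is precisely the effect of the extra $r-1$ units of weight already present in the leading $1$'s; I would verify this by comparing against the $r=1$ case, which must reduce to Sasaki's result \cite[Theorem 7]{Sasaki2012}, and against the level-$1$ analogue \cite[Theorem 9 (i)]{AK1999}. Once the coefficient is pinned down, the identification of the remaining $(m-a_k)$-fold integral with $\TT(a_1+1,\ldots,a_{k-1}+1,a_k+r+1)$ is immediate from Lemma \ref{Lem-6-5}, completing the proof.
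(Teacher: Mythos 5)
Your overall strategy is the same as the paper's: abandon the specialization of Theorem \ref{Th-6-4}, and instead imitate \cite[Theorem 9 (i)]{AK1999} by unfolding the kernel of \eqref{ee-6-1} via \eqref{deriv}, reducing the all-ones part to a power via Lemma \ref{Lem-6-1}~(ii), and identifying the pieces with $\TT$-values through Lemma \ref{Lem-6-5}. That route does work, and it is exactly what the paper does. However, two of the mechanisms you describe are wrong as stated and would derail the computation if followed literally. First, the unfolding via \eqref{deriv} \emph{lowers} the last index from $k$ down to $1$ and is performed $k-1$ times (not $m$ times); after it, the innermost factor is ${\rm Ath}(\underbrace{1,\ldots,1}_{r};\tanh(t_1/2))=\frac{1}{r!}(t_1/2)^r$ --- that is, $r$ ones and a factor $\frac{1}{r!}$, not $r-1$ ones and $\frac{1}{(r-1)!}$; the prefactor $2^r$ in \eqref{ee-6-1} is there precisely to cancel the resulting $(1/2)^r$. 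Second, the binomial coefficient does not arise from ``integration by parts producing binomial factors'' or from an ``$a_k$-fold differentiation of the last entry.'' It arises after the change of variables $t_j=x_{k-j+1}+\cdots+x_k$, which turns the nested domain into $(0,\infty)^k$ with integrand proportional to $(x_1+\cdots+x_k)^m x_k^r$; the multinomial expansion of $(x_1+\cdots+x_k)^m$ produces the constraint $a_1+\cdots+a_k=m$, and matching against the Gamma-normalization of Lemma \ref{Lem-6-5} gives
\begin{equation*}
\frac{1}{m!\,r!}\cdot\frac{m!}{a_1!\cdots a_k!}\cdot a_1!\cdots a_{k-1}!\,(a_k+r)!=\binom{a_k+r}{r}.
\end{equation*}
Your plan to pin down the coefficient by checking the $r=1$ and level-one cases is sound and would catch both slips, but as written the proposal does not yet contain a correct derivation of the coefficient $\binom{a_k+r}{r}$.
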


\begin{proof}
By \eqref{deriv}, we have
\begin{align*}
& \psi({1,\ldots,1},k;m+1)\\
&=\frac{2^r}{m!}\int_0^\infty \frac{t_k^{m}}{\sinh t_k} \int_0^{t_k}
\frac{{\rm Ath}(\overbrace{1,\ldots,1}^{r-1},k-1;\tanh(t_{k-1}/2))}{\sinh t_{k-1}}
\, dt_{k-1}dt_k\\
&=\frac{2^r}{m!}\int_0^\infty \frac{t_k^{m}}{\sinh t_k} \int_0^{t_k}
\frac{1}{\sinh t_{k-1}}\int_0^{t_{k-1}}\\
&\qquad\qquad\frac{{\rm Ath}(\overbrace{1,\ldots,1}^{r-1},k-2;\tanh (t_{k-2}/2))}{\sinh t_{k-2}}
\, dt_{k-2}dt_{k-1}dt_k\\
&=\cdots\\
&=\frac{2^r}{m!}\int_0^\infty \int_0^{t_k}\cdots \int_0^{t_2}
\frac{t_k^m\, {\rm Ath}(\overbrace{1,\ldots,1}^{r};\tanh (t_1/2))}
{\sinh(t_k)\cdots\sinh(t_1)}\, dt_1\cdots dt_k\\
&=\frac{1}{m!r!}\int_0^\infty \int_0^{t_k}\cdots \int_0^{t_2}
\frac{t_k^m\,t_1^r}
{\sinh(t_k)\cdots\sinh(t_1)}\, dt_1\cdots dt_k.
\end{align*}
By the change of variables 
\[t_1=x_k, t_2=x_{k-1}+x_k,\ldots, t_k=x_1+\cdots+x_k,\]
we obtain 
\begin{align*}
&\psi(1,\ldots,1,k;m+1)\\
&=\frac{1}{m!r!}\int_0^\infty \int_0^\infty
\frac{(x_1+\cdots+x_k)^m\,x_k^r}
{\prod_{l=1}^k\sinh(x_l+\cdots+x_k)}\, dt_1\cdots dt_k\\
&=\sum_{a_1+\cdots+a_k=m}\binom{a_k+r}{r}
 \cdot \TT(a_1+1,\ldots,a_{k-1}+1,a_k+r+1). 
\end{align*}
\end{proof}

\begin{corollary}  For $r,k\ge1$, we have the ``height one" duality
\begin{equation}\label{ht1dual}
\TT(\underbrace{1,\ldots,1}_{r-1},k+1)=\TT(\underbrace{1,\ldots,1}_{k-1},r+1). 
\end{equation}
\end{corollary}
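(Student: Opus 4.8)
The plan is to evaluate $\psi(\underbrace{1,\ldots,1}_{r-1},k;s)$ at the single point $s=1$ in two genuinely different ways and compare the results. The first evaluation comes from a general ``value at $s=1$'' formula, the level $2$ analogue of the identity $\xi(\kk;1)=\zeta(k_1,\ldots,k_{r-1},k_r+1)$ in Theorem~\ref{T-3-7}; the second comes from specializing Theorem~\ref{T-5-1}.

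First I would establish, for any index $(k_1,\ldots,k_r)$ with $k_i\in\mathbb{Z}_{\ge1}$, the formula
\[ \psi(k_1,\ldots,k_r;1)=\TT(k_1,\ldots,k_{r-1},k_r+1). \]
Setting $s=1$ in Definition~\ref{Def-6-2} makes both $\Gamma(s)$ and $t^{s-1}$ trivial, so that
\[ \psi(k_1,\ldots,k_r;1)=2^r\int_0^\infty\frac{{\rm Ath}(k_1,\ldots,k_r;\tanh(t/2))}{\sinh t}\,dt. \]
The key point is that the integrand is an exact derivative. Writing $G(t)={\rm Ath}(k_1,\ldots,k_{r-1},k_r+1;\tanh(t/2))$ and applying the chain rule together with the case $k_r+1\ge2$ of Lemma~\ref{Lem-6-1}~(i), one finds (using $\frac{d}{dt}\tanh(t/2)=\tfrac12\,{\rm sech}^2(t/2)$ and $\sinh t=2\sinh(t/2)\cosh(t/2)$)
\[ G'(t)=\frac{1}{\tanh(t/2)}\,{\rm Ath}(k_1,\ldots,k_r;\tanh(t/2))\cdot\tfrac12\,{\rm sech}^2(t/2)=\frac{{\rm Ath}(k_1,\ldots,k_r;\tanh(t/2))}{\sinh t}. \]
Hence the integral telescopes to the boundary term $2^r\bigl[G(t)\bigr]_0^\infty$. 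Since $\tanh(t/2)\to0$ as $t\to0$ and $\tanh(t/2)\to1$ as $t\to\infty$, and the value at $z=1$ converges because the last index $k_r+1$ exceeds $1$, this equals $2^r\,{\rm Ath}(k_1,\ldots,k_{r-1},k_r+1;1)=2^r\,\T(k_1,\ldots,k_{r-1},k_r+1)=\TT(k_1,\ldots,k_{r-1},k_r+1)$ by \eqref{ee-5-2-2}.

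Specializing this to the index $(\underbrace{1,\ldots,1}_{r-1},k)$ gives $\psi(\underbrace{1,\ldots,1}_{r-1},k;1)=\TT(\underbrace{1,\ldots,1}_{r-1},k+1)$. On the other hand, Theorem~\ref{T-5-1} evaluates the very same quantity: taking $m=0$ there forces $a_1=\cdots=a_k=0$, so the sum collapses to the single term with binomial coefficient $\binom{r}{r}=1$, yielding $\psi(\underbrace{1,\ldots,1}_{r-1},k;1)=\TT(\underbrace{1,\ldots,1}_{k-1},r+1)$. Equating the two expressions for $\psi(\underbrace{1,\ldots,1}_{r-1},k;1)$ produces \eqref{ht1dual}. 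I expect the only real work to be the verification of the $s=1$ evaluation, namely confirming via the chain rule and Lemma~\ref{Lem-6-1} that the integrand is a total derivative and checking the boundary behaviour as $t\to\infty$ (which is exactly where the hypothesis that the bumped last index is $\ge2$ enters); the remaining step is a direct substitution of $m=0$ into Theorem~\ref{T-5-1}.
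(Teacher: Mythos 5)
Your proposal is correct and follows essentially the same route as the paper: the paper likewise sets $m=0$ in Theorem~\ref{T-5-1} to get \eqref{dual1} and recognizes the integrand of $\psi(k_1,\ldots,k_r;1)$ as the exact derivative $\frac{d}{dt}\,{\rm Ath}(k_1,\ldots,k_{r-1},k_r+1;\tanh(t/2))$ to get \eqref{dual2}. Your write-up merely makes the chain-rule verification and the boundary analysis explicit, which the paper leaves implicit.
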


\begin{proof}
If we set $m=0$ in \eqref{ee-5-4}, we have
\begin{equation}\label{dual1} \psi(\underbrace{1,\ldots,1}_{r-1},k;1)= \TT(\underbrace{1,\ldots,1}_{k-1},r+1). \end{equation}
On the other hand, from the definition we have in general 
\begin{align*}
\psi(k_1,\ldots,k_r;1)&=2^r \int_0^\infty \frac{ {\rm Ath}(k_1,\ldots,k_r;\tanh (t/2))}{\sinh t}\,dt\\
&=2^r\int_0^\infty \frac{d}{dt} {\rm Ath}(k_1,\ldots,k_{r-1},k_r+1;\tanh (t/2))\,dt\\
&=\TT(k_1,\ldots,k_{r-1},k_r+1)
\end{align*}
and in particular
\begin{equation}\label{dual2}\psi(\underbrace{1,\ldots,1}_{r-1},k;1)= \TT(\underbrace{1,\ldots,1}_{r-1},k+1). \end{equation}
Thus from \eqref{dual1} and \eqref{dual2} we obtain \eqref{ht1dual}.
\end{proof}
We remark that, by computing $\xi(\underbrace{1,...,1}_{r-1},k;1)$ in two ways as above, we obtain an 
alternative proof of the usual height one duality 
$\zeta(\underbrace{1,...,1}_{r-1},k+1)=\zeta(\underbrace{1,...,1}_{k-1},r+1)$.

In the forthcoming paper \cite{KT2}, we extend the duality \eqref{ht1dual} in full generality.

By setting $s=m+1$ in Theorem~\ref{Th-6-4} and comparing with Theorem~\ref{T-5-1}, 
we obtain a level $2$-version of \cite[Corollary 11]{AK1999} as follows.

\begin{theorem}\label{Th-5-2}\ For $m,r\geq 1$ and $k\ge2$,
\begin{align*}
&\sum_{a_1,\ldots,a_k\geq 0 \atop a_1+\cdots+a_k=m}\binom{a_k+r}{r}\cdot \TT(a_1+1,\ldots,a_{k-1}+1,a_k+r+1) \notag\\
& \ +(-1)^{k}\sum_{a_1,\ldots,a_k\geq 0 \atop a_1+\cdots+a_k=r}\binom{a_k+m}{m}\cdot \TT(a_1+1,\ldots,a_{k-1}+1,a_k+m+1)\\
& =\sum_{j=0}^{k-2}(-1)^j \TT(\underbrace{1,\ldots,1}_{r-1},k-j)\cdot \TT(\underbrace{1,\ldots,1}_{j},m+1).
\end{align*}
\end{theorem}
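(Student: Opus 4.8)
The plan is to prove Theorem~\ref{Th-5-2} by evaluating the single quantity $\psi(\underbrace{1,\ldots,1}_{r-1},k;m+1)$ in two independent ways and equating the results, exactly in the spirit of \cite[Corollary 11]{AK1999}. First I would set $s=m+1$ in Theorem~\ref{Th-6-4}, which immediately expresses $\psi(\underbrace{1,\ldots,1}_{r-1},k;m+1)$ as
\begin{align*}
&(-1)^{k-1}\sum_{a_1+\cdots+a_k=r}\binom{m+a_k}{a_k}\TT(a_1+1,\ldots,a_{k-1}+1,a_k+m+1)\\
&\quad+\sum_{j=0}^{k-2}(-1)^j\TT(\underbrace{1,\ldots,1}_{r-1},k-j)\TT(\underbrace{1,\ldots,1}_{j},m+1),
\end{align*}
where I have used $\binom{s+a_k-1}{a_k}=\binom{m+a_k}{a_k}$ at $s=m+1$. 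Second, Theorem~\ref{T-5-1} gives the alternative evaluation
\[
\psi(\underbrace{1,\ldots,1}_{r-1},k;m+1)=\sum_{a_1+\cdots+a_k=m}\binom{a_k+r}{r}\TT(a_1+1,\ldots,a_{k-1}+1,a_k+r+1).
\]

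The core of the argument is then purely formal: I would equate these two expressions for the same quantity. Moving the first (MZV-of-level-2) sum from Theorem~\ref{Th-6-4} to the other side picks up a factor $(-1)^{k}$, producing precisely the term
\[
(-1)^{k}\sum_{a_1+\cdots+a_k=r}\binom{a_k+m}{m}\TT(a_1+1,\ldots,a_{k-1}+1,a_k+m+1)
\]
displayed in the statement, after noting $\binom{m+a_k}{a_k}=\binom{m+a_k}{m}$. The Theorem~\ref{T-5-1} sum supplies the first line of the claimed identity, and the remaining finite sum over $j$ from Theorem~\ref{Th-6-4} becomes the right-hand side verbatim. Thus the identity follows by comparing the two formulas for $\psi(\underbrace{1,\ldots,1}_{r-1},k;m+1)$, with no further computation needed.

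Since both ingredients are already established earlier in the section, there is essentially no analytic obstacle here; the ``hard part'' is only bookkeeping. The one point that warrants care is the binomial symmetry: Theorem~\ref{Th-6-4} naturally produces $\binom{s+a_k-1}{a_k}$, which at $s=m+1$ equals $\binom{m+a_k}{a_k}=\binom{m+a_k}{m}$, matching the form $\binom{a_k+m}{m}$ in the statement, while Theorem~\ref{T-5-1} already carries $\binom{a_k+r}{r}$ in the shape needed for the first line. I would also double-check the sign: the term-by-term transfer across the equality sends $(-1)^{k-1}$ to $(-1)^{k}$ on the right, which is exactly the sign attached to the second summation in the theorem. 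Once these elementary identifications are verified, the proof is complete by direct comparison, so I would present it simply as ``set $s=m+1$ in Theorem~\ref{Th-6-4}, compare with Theorem~\ref{T-5-1}, and rearrange.''
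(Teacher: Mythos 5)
Your proposal is correct and is exactly the paper's own argument: the authors obtain Theorem~\ref{Th-5-2} precisely by setting $s=m+1$ in Theorem~\ref{Th-6-4}, comparing with Theorem~\ref{T-5-1}, and rearranging, with the same sign transfer $(-1)^{k-1}\mapsto(-1)^{k}$ and the same binomial identification $\binom{m+a_k}{a_k}=\binom{a_k+m}{m}$. Nothing further is needed.
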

If we use the duality $\TT(\underbrace{1,\ldots,1}_{j},m+1)=\TT(\underbrace{1,\ldots,1}_{m-1},j+2)$,
the right-hand side becomes the exact analogue of the one in \cite[Corollary 11]{AK1999}.

\begin{example}\label{E-5-3}\ 
We recall
\begin{align*}
&\zeta^{o}(s)\, (=\T(s))=\sum_{n=0}^\infty \frac{1}{(2n+1)^{s}}=\left(1-2^{-s}\right)\zeta(s),\\
&\zeta^{oe}(k,s)\, (=\T(k,s))=\sum_{m=0}^\infty\sum_{n=1}^\infty \frac{1}{(2m+1)^{k}(2m+2n)^{s}}
\end{align*}
(see Kaneko-Tasaka \cite{KanekoTasaka}). 
Since
\begin{align*}
& \TT(s)=2\T(s)=2\zeta^{o}(s),\\
& \TT(k,s)=2^{2}\T(k,s)=2^{2}\zeta^{oe}(k,s),
\end{align*}
Theorem \ref{Th-5-2} for the case $k=2$ and $r=1$ 
gives 
\begin{align*}
& \sum_{a=0}^{m}(a+1)\zeta^{oe}(m-a+1,a+2)\\
& +\zeta^{oe}(2,m+1)+(m+1)\zeta^{oe}(1,m+2)=\zeta^{o}(2)\zeta^{o}(m+1).
\end{align*}
\end{example}

\begin{remark}\label{R-6-1} 
We have introduced the function $\psi(k_1,\ldots,k_r;s)$ as a level $2$-version of $\xi(k_1,\ldots,k_r;s)$,
and proved results corresponding to those in \cite{AK1999}.  
In the forthcoming paper \cite{KT2}, we will further discuss level $2$-versions of poly-Bernoulli numbers
and multiple zeta values in connection to $\psi(k_1,\ldots,k_r;s)$, and hopefully, a version 
corresponding to $\eta(k_1,\ldots,k_r;s)$
\end{remark}

\ 

\if0
{\bf Acknowledgements.}\ 
The authors would like to express their sincere gratitude to the referee for valuable suggestions and comments,
in particular on the equivalence of \eqref{eta-val-Shingu} and \eqref{3-8-2} via \eqref{ord-star}. 
This work was supported by JSPS KAKENHI Grant numbers 16H06336 (M. Kaneko)
and 18K03218 (H. Tsumura).
\fi

\end{document}